\title
[Wigner distribution of Gaussian tempered $\gsp$s]
{The Wigner distribution of Gaussian tempered generalized stochastic processes}
\author[P. Wahlberg]{Patrik Wahlberg}
\address{Dipartimento di Scienze Matematiche, Politecnico di Torino, Corso Duca degli Abruzzi 24,
10129 Torino, Italy}
\email{patrik.wahlberg[AT]polito.it}
\numberwithin{equation}{section}          
\newtheorem{thm}{Theorem}
\numberwithin{thm}{section}
\newcommand{\rubrik}{}
\newtheorem{prop}[thm]{Proposition}
\newtheorem{cor}[thm]{Corollary}
\newtheorem{lem}[thm]{Lemma}
\theoremstyle{definition}
\newtheorem{defn}[thm]{Definition}
\newtheorem{example}[thm]{Example}
\theoremstyle{remark}
\newtheorem{rem}[thm]{Remark}              
\newcommand{\scal}[2]{\langle #1,#2\rangle}
\newcommand{\pd}[1] {\partial ^#1}
\newcommand{\bP}{\mathbf P}
\newcommand{\bE}{\mathbf E}
\newcommand{\ro}{\mathbf R}
\newcommand{\no}{\mathbf N}
\newcommand{\rr}[1]{\mathbf R^{#1}}
\newcommand{\sro}[1]{\mathbf S}
\newcommand{\nn}[1]{\mathbf N^{#1}}
\newcommand{\co}{\mathbf C}
\newcommand{\cc}[1]{\mathbf C^{#1}}
\newcommand{\dd}{\mathrm {d}}
\newcommand{\fy}{\varphi}
\newcommand{\cdo}{\, \cdot \, }
\newcommand{\supp}{\operatorname{supp}}
\newcommand{\eabs}[1]{\langle #1\rangle}
\newcommand{\GL}{\operatorname{GL}}
\newcommand{\cS}{\mathscr{S}}
\newcommand{\cB}{\mathscr{B}}
\newcommand{\cD}{\mathscr{D}}
\newcommand{\cF}{\mathscr{F}}
\newcommand{\kF}{\mathcal{F}}
\newcommand{\cK}{\mathscr{K}}
\newcommand{\cL}{\mathscr{L}}
\newcommand{\J}{\mathcal{J}}
\newcommand{\wt}{\widetilde}
\newcommand{\wh}{\widehat}
\newcommand{\re}{{\rm Re}}
\newcommand{\im}{{\rm Im}}
\def\la{\langle}
\def\ra{\rangle}
\newcommand{\leqs}{\leqslant}
\newcommand{\geqs}{\geqslant}
\newcommand{\gsp}{\operatorname{GSP}}
\subjclass[2010]{Primary: 60G20, 60G15, 81S30, 60H40, 47B65.
\quad Secondary: 47G30, 94A12}
\keywords{Gaussian tempered generalized stochastic processes, time-frequency analysis, Wigner distribution, covariance operator, Weyl pseudodifferential operators}
\begin{document}

\maketitle

\begin{abstract}
We define the Wigner distribution of a tempered generalized stochastic process
that is complex-valued symmetric Gaussian. This gives a time-frequency generalized stochastic process 
defined on the phase space. 
We study its covariance and our main result is a formula for the Weyl symbol
of the covariance operator, expressed in terms of the Weyl symbol of the covariance operator of the 
original generalized stochastic process. 
\end{abstract}

\par

\section{Introduction}\label{sec:intro}

The Wigner distribution is a fundamental concept in quantum mechanics, signal analysis \cite{Flandrin2,Grochenig1} and linear partial differential equations, 
where it appears in the Weyl calculus of pseudodifferential operators \cite{Folland1,Hormander1}.  
It was introduced 1932 in the infancy of mathematical quantum mechanics by Wigner \cite{Wigner1}. 
There it serves as a candidate for a probability density function for a particle in phase space, 
an endeavour that may be seen to fail due to its lack of non-negativity, 
in certain interesting ways related to the uncertainty principle.
In signal analysis the Wigner distribution is used as a time-frequency distribution with high resolution 
but subject to the same interpretational issues as in quantum mechanics. 
 
In the Weyl calculus of pseudodifferential operators the cross-Wigner distribution 
of two functions $f,g$ on $\rr d$
\begin{equation*}
W(g,f) (x,\xi) 
= (2 \pi)^{-\frac{d}{2}} \int_{\rr d} g (x+y/2) \overline{f(x-y/2)} e^{- i \la y, \xi \ra} \dd y 
\end{equation*}
appears in the formula $( a^w(x,D) f, g) = (2 \pi)^{-\frac{d}{2}} ( a, W(g,f) )$
which connects a Weyl pseudodifferential operator $a^w(x,D)$ and its symbol $a$
which is a function defined on phase space $T^* \rr d$. 

By now many properties of the Wigner distribution $W(f) = W(f,f)$ are known for functions and distributions $f$. 
In signal analysis much work has been devoted to the Wigner distribution of stochastic processes \cite{Boashash1,Flandrin1,Hlawatsch1,Martin1}. 
The analysis of the Wigner distribution of generalized stochastic processes dates back to the PhD thesis of A.~J.~E.~M.~Janssen \cite{Janssen1}.
He studied the expected value of this Wigner distribution, also known in the applied literature as the Wigner spectrum.
Many authors have contributed to this study \cite{Boashash1,Feichtinger1,Feichtinger2,Flandrin2,Flandrin3,Hlawatsch1,Keville1}, 
already in the 1990s, and the activity has since then abated. 

Nevertheless it seems that not much attention has been devoted to the second order statistical properties of the Wigner distribution of 
generalized stochastic processes, except for \cite[Chapter~9.1]{Boashash1} by Stankovi\'c, and \cite{Wahlberg1} which may be seen as a particular case of some of the results presented here. 
We mean the study of its covariance function or distribution. 
This apparent gap in the literature is the motivation for the present work. 

The main new feature of this paper compared to \cite{Boashash1,Feichtinger2,Flandrin2,Hormann1,Janssen1,Keville1,Martin1} is hence the study of the covariance properties 
of the Wigner distribution of generalized stochastic processes. 
The added assumption that the generalized stochastic processes be Gaussian and symmetric allows 
such an analysis. 

A covariance distribution is the Schwartz kernel of a linear covariance operator. 
This admits the transfer of the study of covariance properties to the study of operators, or equivalently 
to the study of the corresponding Weyl symbols in the framework of pseudodifferential operators. 
It turns out to be convenient to work on the Weyl symbol side. 

Our main result is the formula for $x_1, x_2, \xi_1, \xi_2 \in \rr d$
\begin{equation*}
\sigma_W (x_1, x_2, \xi_1, \xi_2) = 
\sigma_u \left( x_1 - \frac12 \xi_2, x_2 + \frac12 \xi_1 \right) \sigma_u \left( x_1 + \frac12 \xi_2, x_2 - \frac12 \xi_1 \right) 
\end{equation*}
which expresses the Weyl symbol $\sigma_W$ of the zero mean Wigner distribution (where we subtract the mean from the Wigner distribution) of a Gaussian symmetric tempered generalized stochastic process $u$, in terms of the Weyl symbol $\sigma_u$ (a k a the Wigner spectrum) of the covariance operator for $u$. 

White noise is characterized as a generalized stochastic process $u$ with $\sigma_u$ equal to a positive constant. 
Thus the formula above says in particular that Gaussian symmetric white noise on $\rr d$ has a zero mean Wigner distribution which is white noise on the phase space $T^* \rr d$. 

We work out some consequences of the formula when we add the assumption on $u$ to be stationary. 
It then turns out that the covariance operator for the Wigner distribution of $u$, that is the Weyl quantization of $\sigma_W$, 
commutes with translation of the first (``time'') variable, and also with modulation in the second (``frequency'') variable. 
These observations support the interpretation of the Wigner distribution as a time-frequency representation 
of generalized stochastic processes. 

We note that Flandrin \cite{Flandrin3} has studied the covariance properties of the spectrogram with a Gaussian window function of white symmetric Gaussian noise. 
In \cite{Abreu1,Bardenet1} an observation by Flandrin concerning the zeros of the spectrogram of white noise is analyzed, 
and \cite{Abreu2} treats analytic continuation of Brownian motion.
These works are recent contributions in the vicinity of the present paper albeit not directly connected. 

Our analysis is based on Gelfand's and Vilenkin's concept of generalized stochastic process
as a linear continuous map from a space of test functions into a space of random variables \cite[Chapter~3]{Gelfand4}. 
A more recent and nowadays established framework for generalized stochastic processes is 
the white noise analysis developed by T.~Hida starting in 1975 \cite{Biagini1,Kuo1}, 
which is an infinite-dimensional stochastic calculus with many ramifications and applications. 
This framework admits the concept of white noise and it is used in \cite{Bardenet1,Abreu1,Abreu2}.

White noise can also be defined in the Gelfand--Vilenkin framework as 
a generalized stochastic process whose covariance operator is the identity times a positive constant. 
For the purposes of this article it suffices to work in the latter framework. 
Thus we eschew the full stochastic white noise calculus, and leave it
as an open problem to formulate the Wigner distribution in this framework for the future.

The paper is organized in the following way. 
Section \ref{sec:prelim} contains notations and conventions. 
In Section \ref{sec:GSPWeylWigner} we define stochastic processes, generalized stochastic processes,
the subspaces of tempered and stationary generalized stochastic processes, and white noise. 
We also give the necessary background on Weyl pseudodifferential operators and their connection to the Wigner distribution. 

Section \ref{sec:Gaussiantempered} introduces our framework of 
Gaussian symmetric tempered generalized stochastic processes.
In Section \ref{sec:wignergeneral} we define 
the Wigner distribution of such generalized stochastic processes, 
and we prove our main result Theorem \ref{thm:symbol}. 

Consequences of this result for stationary, and frequency stationary, generalized stochastic processes
are discussed in Section \ref{sec:wignerstationary}, in particular we look at white noise. 
Section \ref{sec:WignerBrown} is devoted to Brownian motion. 
We rededuce a formula by Flandrin \cite{Flandrin2} for its Wigner spectrum, and we apply Theorem \ref{thm:symbol}
which gives a formula for the corresponding $\sigma_W$. 
In Section \ref{sec:NonnegWeylCalc} we deduce a consequence of Theorem \ref{thm:symbol} concerning non-negative pseudodifferential operators. 
Finally in Section \ref{sec:NonnegWignerSpectrum} we treat briefly (without saying something new) Flandrin's observation \cite{Flandrin1} 
that Wigner spectra are much more often non-negative than Wigner distributions of deterministic functions.

\section{Preliminaries}\label{sec:prelim}

The notation $K \Subset \rr d$ means that $K$ is a compact subset of $\rr d$, 
and we use $\ro_+$ to denote the non-negative real numbers. 
The partial derivative $D_j = - i \partial_j$, $1 \leqs j \leqs d$, acts on functions and distributions on $\rr d$, 
with extension to multi-indices as $D^\alpha = i^{-|\alpha|} \partial^\alpha$ for $\alpha \in \nn d$. 
We use the bracket $\eabs{x} = (1 + |x|^2)^{\frac12}$ for $x \in \rr d$. 
The phase or time-frequency space of $\rr d$ is $T^* \rr d = \rr {2d}$. 

The symbol $C_c^\infty(\rr d)$ denotes the compactly supported smooth test functions, and 
$\cD'(\rr d)$ its topological dual, the space of distributions. 
The Schwartz space $\cS(\rr d)$ consists of smooth functions for which the derivative of any order 
has superpolynomial decay at infinity. 
Thus $C_c^\infty(\rr d) \subseteq \cS(\rr d)$ and the dual space to $\cS(\rr d)$, called the space of tempered distributions, satisfies $\cS'(\rr d) \subseteq \cD' (\rr d)$. 
If $H$ is a Hilbert space then $L^1_{\rm loc} (\rr d, H)$ denotes the space of strongly measurable locally integrable functions with values in $H$. 
The space of linear continuous operators from a topological vector space $X$ to another such space $Y$ is denoted $\cL(X,Y)$. 
The same notation is used if the operator is conjugate linear.

The normalization of the Fourier transform is
\begin{equation*}
 \cF f (\xi )= \widehat f(\xi ) = (2\pi )^{-\frac d2} \int _{\rr
{d}} f(x)e^{-i\scal  x\xi }\, \dd x, \qquad \xi \in \rr d, 
\end{equation*}
for $f\in \cS(\rr d)$, where $\scal \cdo \cdo$ denotes the scalar product on $\rr d$. 
Then we have for $f, g \in \cS(\rr d)$
\begin{equation}\label{eq:convolutionFourier}
\wh{f * g} = (2\pi )^{\frac d2} \wh f \, \wh g
\end{equation}
and this identity extends to $f \in \cS'(\rr d)$ and $g \in \cS(\rr d)$,
with the Fourier transform defined on $\cS'(\rr d)$ 
as $( \wh f, \wh g) = (f,g)$ for $f \in \cS'(\rr d)$ and $g \in \cS(\rr d)$. 

We denote by $\cF_2$ the partial Fourier transform with respect to the second $d$ coordinates in $\rr {2d}$ of a tempered distribution defined on $\rr {2d}$, 
or the partial Fourier transform with respect to the second $2d$ coordinates in $\rr {4d}$ of a tempered distribution defined on $\rr {4d}$.
The distinction will be clear from the context. 
Finally $\kF_j$, $j = 1,2,3,4$, denotes the partial Fourier transform with respect to the $j$th $d$ coordinates in $\rr {4d}$ of 
a tempered distribution defined on $\rr {4d}$. 
Thus we may write coherently $\cF_2 = \kF_3 \kF_4$ on $\rr {4d}$.

The conjugate linear (antilinear) action of a distribution $u$ on a test function $\phi$ is written $(u,\phi)$, consistent with the $L^2$ inner product $(\cdo ,\cdo ) = (\cdo ,\cdo )_{L^2}$ which is conjugate linear in the second argument. 
Translation of a function or a distribution $f$ is denoted $T_x f(y) = f(y-x)$ for $x,y \in \rr d$, 
and modulation as $M_\xi f(x) = e^{i \la x, \xi \ra} f(x)$ for $x,\xi \in \rr d$.

\section{Generalized stochastic processes, Weyl pseudodifferential operators and the Wigner distribution}\label{sec:GSPWeylWigner}

Let $\Omega$ be a sample space equipped with a $\sigma$-algebra $\cB$ of subsets of $\Omega$
and let $\bP$ be a probability measure defined on $\cB$. 
The space of $\co$-valued second order random variables is the Hilbert space $L^2(\Omega)$ equipped with the inner product
\begin{equation*}
L^2(\Omega) \times L^2(\Omega) \ni (X,Y) \mapsto \bE ( X \overline Y ) = (X, Y )_{L^2(\Omega)}
\end{equation*}
where 
\begin{equation*}
\bE X = \int_{\Omega} X(\omega) \bP( \dd \omega)
\end{equation*}
is the expectation functional (integral). 
The Hilbert subspace of $L^2(\Omega)$ of zero mean random variables is denoted $L_0^2(\Omega)$, and each element $X \in L_0^2(\Omega)$
thus satisfies $\bE X = 0$ and $\bE |X|^2 < \infty$.

\subsection{Second order stochastic processes}\label{subsec:stochproc}

A second order stochastic process is an element $f \in L_{\rm{loc}}^1( \rr d, L^2(\Omega) )$.
The cross-covariance function of $f,g \in L_{\rm{loc}}^1 ( \rr d, L^2(\Omega) )$ is
\begin{equation*}
k_{fg} ( x ,y) = \bE ( f(x)  \overline{g(y)} ), \quad x, y \in \rr d. 
\end{equation*}
The function $k_f = k_{f f}$ is called (auto-)covariance function of $f$. 
By the Cauchy--Schwarz inequality in $L^2(\Omega)$ we have 
\begin{equation*}
|k_{f g} ( x ,y)|^2 
\leqs k_f ( x ,x) \, k_g ( y ,y), \quad x, y \in \rr d,
\end{equation*}
which implies that $k_{f g}$ extends to an element in $\cD'(\rr {2d})$ \cite{Gelfand4} when $f,g \in L_{\rm{loc}}^1( \rr d, L^2(\Omega) )$. 
Defining the cross-covariance operator as
\begin{equation*}
( \cK_{f g} \fy, \psi) = ( k_{f g}, \psi \otimes \overline \fy )_{L^2(\rr {2d} )}, \quad \fy, \psi \in C_c^\infty(\rr d), 
\end{equation*}
yields a continuous linear cross-covariance operator $\cK_{f g}: C_c^\infty(\rr d) \to \cD'(\rr d)$. 
This is a consequence of Schwartz's kernel theorem \cite[Theorem~5.2.1]{Hormander1}. 

Since Fubini's theorem gives
\begin{align*}
( k_f, \fy \otimes \overline \fy )_{L^2(\rr {2d})}
& = \bE \left| \int_{\rr d}   f(x) \overline{\fy (x)} \, \dd x \right|^2
\geqs 0 \quad \forall \fy \in C_c^\infty(\rr d),    
\end{align*}
it is clear that $k_f$ is the kernel of a non-negative linear continuous covariance operator $\cK_f = \cK_{f f}: C_c^\infty(\rr d) \to \cD'(\rr d)$.

\subsection{Generalized stochastic processes}\label{subsec:GSP}

For brevity we drop the term second order which is understood in the sequel. 
In this article we will use the following definition of generalized stochastic processes due to Gelfand and Vilenkin \cite{Gelfand4}. 

\begin{defn}\label{def:gsp}
A generalized stochastic process ($\gsp$) $u \in \cL ( C_c^\infty(\rr d), L^2(\Omega) )$ is a 
conjugate linear continuous operator $u: C_c^\infty(\rr d) \to L^2(\Omega)$, 
written as $(u,\fy) \in L^2(\Omega)$ for $\fy \in C_c^\infty(\rr d)$. 
A zero mean GSP takes values in $L_0^2(\Omega)$, that is 
$u \in \cL ( C_c^\infty(\rr d), L_0^2(\Omega) )$, and has acronym $\gsp_0$.
\end{defn}

Thus the space of $\gsp$s $\cL ( C_c^\infty(\rr d), L^2(\Omega) )$ is a conjugate linear space of continuous operators 
from the locally convex topological vector space $C_c^\infty(\rr d)$ into the Hilbert space $L^2(\Omega)$.
For each $K \Subset \rr d$ there exist $C > 0$ and $k \in \no$ 
such that  
\begin{equation*}
\| (u,\fy) \|_{L^2(\Omega)}
\leqs C \sum_{|\alpha| \leqs k} \sup_{x \in \rr d} |\pd \alpha \fy (x)|, \quad \fy \in C_c^\infty(K).
\end{equation*}

As in ordinary distribution theory \cite{Gelfand1,Hormander1} a $\gsp$ $u$ is always differentiable as
\begin{equation*}
( D^\alpha u,\fy) = (u, D^\alpha \fy), \quad \fy \in C_c^\infty (\rr d), \quad \alpha \in \nn d. 
\end{equation*}
If $u \in  \cL ( C_c^\infty(\rr d), L^2(\Omega) )$ then 
the mean $m_u$ is defined by 
\begin{equation}\label{eq:meandist}
(m_u, \fy  ) 
= \bE (u,\fy), \quad \fy \in C_c^\infty (\rr d). 
\end{equation}
Due to $\bE |X| \leqs \left( \bE |X|^2 \right)^{\frac12}$, that is the embedding $L^2(\Omega) \subseteq L^1(\Omega)$,
we have $m_u \in \cD'(\rr d)$.

Let $u,v \in  \cL ( C_c^\infty(\rr d), L^2(\Omega) )$. The cross-covariance distribution $k_{u v}$ is defined by 
\begin{equation}\label{eq:crosscovdist}
(k_{u v}, \fy \otimes \overline \psi ) 
= \bE ( (u,\fy)  \overline{ (v,\psi)} ), \quad \fy, \psi \in C_c^\infty (\rr d). 
\end{equation}
It satisfies $k_{u v}(x,y) = \overline{ k_{v u}(y,x)}$, and for each pair $K_1, K_2 \Subset \rr d$
there is $C > 0$ and $k_1, k_2 \in \no$ such that 
\begin{align*}
|(k_{u v}, \fy \otimes \overline \psi )|
& \leqs C \sum_{|\alpha| \leqs k_1} \sup_{x \in \rr d} |\pd \alpha \fy (x)| \, 
\sum_{|\beta| \leqs k_2} \sup_{x \in \rr d} |\pd \beta \psi (x)|, \\
& \qquad \qquad \fy \in C_c^\infty(K_1), \quad \psi \in C_c^\infty(K_2). 
\end{align*}
Thus $k_{u v}$ is a sesquilinear continuous form on $C_c^\infty(\rr d) \times C_c^\infty(\rr d)$. 
If we equip $\cD'(\rr d)$ with its weak$^*$ topology then 
\begin{equation*}
( \cK_{u v} \psi, \fy ) = (k_{u v}, \fy \otimes \overline \psi ), \quad \fy, \psi \in C_c^\infty(\rr d), 
\end{equation*}
defines a continuous linear operator $\cK_{u v}: C_c^\infty(\rr d) \to \cD' (\rr d)$, 
called the cross-covariance operator. 
From the Schwartz kernel theorem \cite[Theorem~5.2.1]{Hormander1} it follows that $k_{u v} \in \cD'(\rr {2d})$. 

If $v = u$ then we call $k_u = k_{u u} \in \cD'(\rr {2d})$ the (auto-)covariance distribution
and $\cK_u = \cK_{u u} \in \cL( C_c^\infty(\rr d), \cD' (\rr d) )$ the (auto-)covariance operator of $u$. 
Then $(k_u, \fy \otimes \overline \fy ) \geqs 0$ for all $\fy \in C_c^\infty(\rr d)$
so $\cK_u \geqs 0$ is non-negative in the sense of 
\begin{equation*}
( \cK_u \fy, \fy ) \geqs 0 \quad \forall \fy \in C_c^\infty(\rr d). 
\end{equation*}
It holds $( \cK_u \fy, \psi ) = \overline{( \cK_u \psi, \fy )}$ for all $\fy,\psi \in C_c^\infty(\rr d)$. 

A stochastic process $f \in L_{\rm{loc}}^1(\rr d, L^2(\Omega) )$ may be considered a generalized stochastic process in $\cL ( C_c^\infty(\rr d) , L^2(\Omega) )$ by means of 
\begin{equation*}
C_c^\infty (\rr d) \ni \fy \mapsto (f, \fy) = \int_{\rr d} f(x ) \, \overline{ \fy (x) } \dd x, 
\end{equation*}
and then there is consistency between the covariance function and the covariance distribution, in the sense of 
\begin{equation*}
( k_f, \psi \otimes \overline \fy )_{L^2(\rr {2d})}
= \iint_{\rr {2d}} \bE \left( f(x) \overline{ f(y)}  \right) 
\overline{\psi(x)} \fy (y) \dd x \, \dd y
= \bE \left( (f, \psi) \overline{ (f, \fy)} \right). 
\end{equation*}
%

\subsection{Tempered generalized stochastic processes}\label{subsec:temperedGSP}

In the sequel we will work with tempered generalized stochastic processes. 
This means that we extend the domain in Definition \ref{def:gsp}, that is the test function space $C_c^\infty(\rr d)$, 
into the Schwartz space $\cS(\rr d) \supseteq C_c^\infty(\rr d)$. 
It thus leads to a smaller space of generalized stochastic processes 
$\cL ( \cS(\rr d), L^2(\Omega) ) \subseteq \cL ( C_c^\infty(\rr d), L^2(\Omega) )$. 

\begin{defn}\label{def:gsptemp}
A \emph{tempered} $\gsp$ $u \in \cL ( \cS(\rr d), L^2(\Omega) )$ is a 
conjugate linear continuous operator $u: \cS(\rr d) \to L^2(\Omega)$, 
written as $(u,\fy) \in L^2(\Omega)$ for $\fy \in \cS(\rr d)$. 
\end{defn}

If $u,v$ are tempered $\gsp$s then $k_{u v} \in \cS'(\rr {2d})$,
the cross-covariance operator is continuous
$\cK_{u v}: \cS(\rr d) \to \cS' (\rr d)$, 
and $\cK_u \geqs 0$ on $\cS(\rr d)$.
For a tempered $\gsp$ $u \in \cL ( \cS(\rr d), L^2(\Omega) )$
the Fourier transform can be defined as $(\wh u, \wh \fy) = (u, \fy)$ for $\fy \in \cS(\rr d)$,
as in ordinary distribution theory \cite{Hormander1}.
The following simple device will be useful. 

\begin{lem}\label{lem:FourierCovOp}
If $u \in \cL ( \cS(\rr d), L^2(\Omega) )$ then the covariance operators of $u$ and $\wh u$
are related as $\cK_{\wh u} = \cF \cK_u \cF^{-1}$. 
\end{lem}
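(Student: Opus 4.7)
The plan is a direct verification by testing both sides of the claimed identity against pairs $\fy, \psi \in \cS(\rr d)$ in the sesquilinear form $(\cK_{\bullet} \psi, \fy)$, and matching the result step by step.

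First, I would unfold the definition of the covariance operator for $\wh u$ using its kernel: by \eqref{eq:crosscovdist} applied to the tempered GSP $\wh u$,
\begin{equation*}
(\cK_{\wh u} \psi, \fy) = (k_{\wh u}, \fy \otimes \overline \psi) = \bE\bigl((\wh u, \fy)\,\overline{(\wh u, \psi)}\bigr).
\end{equation*}

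Next, I would invoke the definition of the Fourier transform of a tempered GSP, which in the form given in the excerpt reads $(\wh u, \wh \eta) = (u,\eta)$; replacing $\eta$ by $\cF^{-1}\eta$ yields the equivalent formulation $(\wh u, \eta) = (u, \cF^{-1}\eta)$ for all $\eta \in \cS(\rr d)$. Substituting this for $\eta = \fy$ and $\eta = \psi$ gives
\begin{equation*}
\bE\bigl((\wh u, \fy)\,\overline{(\wh u, \psi)}\bigr)
= \bE\bigl((u, \cF^{-1}\fy)\,\overline{(u, \cF^{-1}\psi)}\bigr)
= (\cK_u \cF^{-1}\psi,\, \cF^{-1}\fy),
\end{equation*}
the last equality being the definition of $\cK_u$ read in the opposite direction.

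Finally, I would apply Parseval's identity in the conjugate-linear bracket, which in our convention takes the form $(h, \cF^{-1}\fy) = (\cF h, \fy)$, to obtain
\begin{equation*}
(\cK_u \cF^{-1}\psi,\, \cF^{-1}\fy) = (\cF \cK_u \cF^{-1}\psi,\, \fy).
\end{equation*}
Chaining the three steps yields $(\cK_{\wh u} \psi, \fy) = (\cF \cK_u \cF^{-1}\psi, \fy)$ for arbitrary $\fy, \psi \in \cS(\rr d)$, which is the claimed operator identity.

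No substantial obstacle is expected: the result is a bookkeeping exercise in unraveling definitions. The only delicate point is to keep the conjugate-linear convention straight when moving $\cF$ across the pairing, so that Parseval is applied in the correct slot and no complex conjugation is accidentally absorbed into $\cF$ versus $\cF^{-1}$. Once this is checked, the three equalities above combine without further work.
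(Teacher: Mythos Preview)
Your proposal is correct and follows essentially the same chain of equalities as the paper's own proof: unfold $(\cK_{\wh u}\psi,\fy)$ via the covariance kernel, apply $(\wh u,\eta)=(u,\cF^{-1}\eta)$, rewrite as $(\cK_u\cF^{-1}\psi,\cF^{-1}\fy)$, and move $\cF$ across the pairing to get $(\cF\cK_u\cF^{-1}\psi,\fy)$. The paper's argument is exactly this, with the same bookkeeping about the conjugate-linear convention.
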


\begin{proof}
For $\fy, \psi \in \cS(\rr d)$ we have 
\begin{align*}
( \cK_{\wh u} \psi, \fy )  
& = (k_{\wh u}, \fy \otimes \overline \psi ) 
= \bE ( (\wh u,\fy)  \overline{ (\wh u,\psi)} )
= \bE ( (u, \cF^{-1} \fy)  \overline{ (u, \cF^{-1} \psi)} ) \\
& = (k_{u}, \cF^{-1} \fy \otimes \overline{\cF^{-1} \psi} )
= ( \cK_u \cF^{-1} \psi, \cF^{-1} \fy)
= ( \cF \cK_u \cF^{-1} \psi, \fy). 
\end{align*}
\end{proof}

\subsection{Stationary generalized stochastic processes}\label{subsec:stat}

A very well studied subspace of $\cL ( C_c^\infty(\rr d) , L^2(\Omega) )$
is the space of stationary $\gsp$s.

\begin{defn}\label{def:stat}
Let $u \in \cL ( C_c^\infty(\rr d) , L^2(\Omega) )$ be a $\gsp$ 
so that $m_u \in \cD'(\rr d)$ (cf. Eq. \eqref{eq:meandist}) and $k_u \in \cD'(\rr {2d})$.  
Then $u$ is said to be \emph{stationary} if $m_u = c \in \co$ is constant, and its covariance distribution $k_u$ is translation invariant as
\begin{equation*}
(k_u, T_x \fy \otimes T_x \overline{\psi}) = (k_u, \fy \otimes \overline{\psi}) \quad \forall \fy, \psi \in C_c^\infty (\rr d) \quad \forall x \in \rr d, 
\end{equation*}
cf. \cite[Chapter 3, \S3]{Gelfand4}.
\end{defn}

Note that the condition of stationarity means that the covariance operator commutes with translation: 
\begin{align*}
( \cK_u T_x \psi, \fy) 
& = (k_u, T_x T_{-x} \fy \otimes T_x \overline{\psi}) 
= (k_u, T_{-x} \fy \otimes \overline{\psi}) 
= ( \cK_u \psi, T_{-x} \fy ) \\
& = ( T_x \cK_u \psi,  \fy ) \quad \forall \fy, \psi \in C_c^\infty (\rr d) \quad \forall x \in \rr d, 
\end{align*}
that is 
\begin{equation}\label{eq:TranslationInvariance}
\cK_u T_x  = T_x \cK_u \quad \forall x \in \rr d.
\end{equation}

\begin{rem}\label{rem:stationary}
In the literature \cite{Doob1,Miller1} Definition \ref{def:stat} is usually designated \emph{wide-sense} or \emph{weakly stationary}, 
and the term \emph{stationary} is used if 
for any $n \in \no$, any $\{ \fy_j \}_{j=1}^{n} \subseteq C_c^\infty (\rr d)$, and any $x \in \rr d$, 
the two sets of random variables
\begin{equation*}
\{ (u, \fy_1), \cdots ,(u, \fy_n) \}
\end{equation*}
and 
\begin{equation*}
\{ (u, T_x \fy_1), \cdots ,(u, T_x \fy_n) \}
\end{equation*}
have identical probability laws. 
Stationarity is a stronger property than wide-sense stationarity in general. 
For Gaussian $\gsp$s the two concepts coincide \cite{Doob1}. 
In this paper we study Gaussian $\gsp$s and use only the term stationary. 
\end{rem}

If a $\gsp$ $u \in \cL ( C_c^\infty(\rr d) , L^2(\Omega) )$ is stationary then there exists
$h_u \in \cD'(\rr d)$ such that 
\begin{equation*}
(k_u, \fy \otimes \overline{\psi})
= (h_u, \fy * \psi^* ) \quad \forall \fy, \psi \in C_c^\infty (\rr d), 
\end{equation*}
where $\psi^*(x) = \overline{\psi (-x)}$, 
see \cite[Chapter 2, \S3.5 and Chapter 3, \S3.2]{Gelfand4} and \cite[Theorem~3.1.4$'$]{Hormander1}. 
Equivalently we have 
\begin{equation}\label{eq:kernelstat}
k_u = (1 \otimes h_u) \circ \kappa^{-1}
\end{equation}
where $\kappa$ denotes the matrix 
\begin{equation}\label{eq:kappa}
\kappa = 
\left(
\begin{array}{ll}
I_d & \frac12 I_d \\
I_d & - \frac12 I_d
\end{array}
\right) \in \rr {2d \times 2d}
\end{equation}
whose inverse is  
\begin{equation}\label{eq:kappainv}
\kappa^{-1} = 
\left(
\begin{array}{ll}
\frac12 I_d & \frac12 I_d \\
I_d & - I_d
\end{array}
\right) \in \rr {2d \times 2d}.
\end{equation}

We also have for $\fy, \psi \in C_c^\infty (\rr d)$
\begin{equation}\label{eq:CovOpConv}
( \cK_u \psi,\fy )
= ( k_u, \fy \otimes \overline{\psi} )
= ( h_u, \fy * \psi^* )
= \int_{\rr d} ( h_u, \overline{ \psi(x - \cdot) } ) \, \overline{ \fy(x) } \dd x
= ( h_u * \psi, \fy )
\end{equation}
which means that $\cK_u \psi = h_u * \psi$. 

By the Bochner--Schwartz theorem \cite[Chapter 2, \S3.3, Theorem~3]{Gelfand4} there exists a spectral non-negative Radon measure 
\begin{equation}\label{eq:SpectralMeasure}
\mu_u = (2 \pi)^{\frac{d}{2}} \wh h_u
\end{equation}
on $\rr d$ that satisfies 
\begin{equation}\label{eq:temperedmeasure}
\int_{\rr d} \eabs{x}^{-s}  \, \dd \mu_u (x) < \infty
\end{equation}
for some $s \geqs 0$. 
Such a measure is called tempered. 

Hence (cf. \eqref{eq:convolutionFourier})
\begin{equation*}
( h_u, \fy * \psi^* ) = \int_{\rr d}  \wh \psi (\xi) \, \overline{ \wh \fy (\xi) } \, \dd \mu_u (\xi). 
\end{equation*}
This gives for $\fy, \psi \in C_c^\infty (\rr d)$
\begin{equation}\label{eq:gspstat1}
\bE \left( (u, \fy ) \overline{(u,\psi)} \right)
= (k_u, \fy \otimes \overline{\psi})
= ( \cK_u \psi, \fy)
= ( h_u, \fy * \psi^* ) 
= ( \psi, \fy )_{\cF L^2(\mu_u)}
\end{equation}
and in particular
\begin{equation}\label{eq:gspstat2}
\| (u, \fy ) \|_{L^2(\Omega)}^2
= \int_{\rr d}  |\wh \fy (\xi) |^2 \, \dd \mu_u (\xi). 
\end{equation}
Here $\cF L^2(\mu_u) \subseteq \cS'(\rr d)$ is the Hilbert subspace of tempered distributions $f$ such that $\wh f \in L_{\rm{loc}}^2(\mu_u)$
and $\wh f$ is square integrable with respect to $\mu_u$. 

Let $u \in \cL ( C_c^\infty(\rr d) , L^2(\Omega) )$ be stationary and denote by $\mu_u$ the corresponding spectral non-negative tempered Radon measure. 
The identity \eqref{eq:gspstat1} implies that the linear map 
$\fy \mapsto \overline{(u,\fy) }$ extends uniquely to 
an isometry between Hilbert spaces $\cF L^2(\mu_u) \to L^2(\Omega)$ \cite{Conway1},
and $\cK_u$ extends uniquely to the identity operator on $\cF L^2(\mu_u)$.
Since $\cS(\rr d) \subseteq \cF L^2(\mu_u)$ it follows that a stationary $\gsp$ is always tempered.

Lemma \ref{lem:FourierCovOp} combined with $\cK_u \psi = h_u * \psi$, 
\eqref{eq:convolutionFourier}
and \eqref{eq:SpectralMeasure} give for $f \in \cS(\rr d)$
\begin{equation}\label{eq:}
\cK_{\wh u} f 
= \cF \left( h_u * \cF^{-1} f \right)
= \mu_u f
\end{equation}
which means that $\wh u$ has a multiplicative covariance operator.

The following definition appears in \cite[Definition~3]{Feichtinger2} and \cite[Definition~2.2.2]{Janssen1}.

\begin{defn}\label{def:FrequencyStat}
Let $u \in \cL ( \cS(\rr d) , L^2(\Omega) )$. 
If $\wh u \in \cL ( \cS(\rr d) , L^2(\Omega) )$ is stationary then $u$ is called frequency stationary. 
\end{defn}

From Lemma \ref{lem:FourierCovOp} and \eqref{eq:TranslationInvariance}
it follows that a frequency stationary tempered $\gsp$ $u$ has a modulation invariant
covariance operator:
\begin{equation}\label{eq:ModulationInvariance}
\cK_u M_x  = M_x \cK_u \quad \forall x \in \rr d.
\end{equation}
%

\subsection{Weyl pseudodifferential operators and the Wigner distribution}

If $\sigma \in \cS(\rr {2d})$ is a Weyl symbol then the Weyl pseudodifferential operator \cite{Folland1,Hormander1} is defined as 
\begin{equation}\label{eq:weylquantization}
\sigma^w(x,D) f(x)
= (2\pi)^{-d}  \int_{\rr {2d}} e^{i \langle x-y, \xi \rangle} \sigma \left(\frac{x+y}{2},\xi \right) \, f(y) \, \dd y \, \dd \xi, \quad f \in \cS(\rr d), 
\end{equation}
and $\sigma^w(x,D): \cS(\rr d) \to \cS(\rr d)$ continuously. 
By the invariance of $\cS'(\rr {2d})$ under linear invertible coordinate transformations and partial Fourier transforms, 
the Weyl correspondence extends to $\sigma \in \cS'(\rr {2d})$ in which case $\sigma^w(x,D): \cS(\rr d) \to \cS' (\rr d)$ is continuous. 
Note that $\sigma^w(x,D) = I$ is the identity operator if $\sigma \equiv 1$.

If $\sigma \in \cS'(\rr {2d})$ then 
\begin{equation}\label{eq:wignerweyl}
( \sigma^w(x,D) f, g) = (2 \pi)^{-\frac{d}{2}} ( \sigma, W(g,f) ), \quad f, g \in \cS(\rr d), 
\end{equation}
where the cross-Wigner distribution \cite{Folland1,Grochenig1} is defined as 
\begin{equation}\label{eq:WignerSchwartz}
\begin{aligned}
W(g,f) (x,\xi) 
& = (2 \pi)^{-\frac{d}{2}} \int_{\rr d} g (x+y/2) \overline{f(x-y/2)} e^{- i \la y, \xi \ra} \dd y \\
& = \cF_2 \left( ( g \otimes \overline f) \circ \kappa \right)(x,\xi), \quad (x,\xi) \in T^* \rr d, 
\end{aligned}
\end{equation}
where $\cF_2$ denotes the partial Fourier transformation with respect to the second $\rr d$ variable in $\rr {2d}$
and $\kappa$ is the matrix \eqref{eq:kappa}. 
We have $W(g,f) \in \cS (\rr {2d})$ when $f,g \in \cS(\rr d)$. 
The Wigner distribution of $f \in \cS(\rr d)$ is $W(f) = W(f,f)$. 

Conversely for any continuous linear operator $\cK: \cS(\rr d) \to \cS' (\rr d)$ there exists 
a kernel $k \in  \cS' (\rr {2d})$
and a Weyl symbol $\sigma_k \in  \cS' (\rr {2d})$ such that 
\begin{equation*}
( \cK f,g) = (k, g \otimes \overline f) 
= (2 \pi)^{-\frac{d}{2}} ( \sigma_k, W(g,f) ), \quad f,g \in \cS(\rr d). 
\end{equation*}
By \eqref{eq:WignerSchwartz} we have 
\begin{equation*}
( \sigma_k, W(g,f) )
= ( ( \cF_2^{-1} \sigma_k ) \circ \kappa^{-1}, g \otimes \overline f) 
\end{equation*}
which means that the kernel is related to the Weyl symbol as $k = (2 \pi)^{-\frac{d}{2}} ( \cF_2^{-1} \sigma_k ) \circ \kappa^{-1}$. 

When $\cK_u$ is the covariance operator of a tempered $\gsp$ 
$u \in \cL ( \cS (\rr d) , L^2(\Omega) )$ then we denote the 
Weyl symbol for $\cK_u$ as $\sigma_u \in \cS'(\rr {2d})$. 
Hence we have the bijective correspondence
\begin{equation}\label{eq:KernelWeylsymbol}
k_u = (2 \pi)^{-\frac{d}{2}} ( \cF_2^{-1} \sigma_u ) \circ \kappa^{-1}
\quad \Longleftrightarrow \quad 
\sigma_u = (2 \pi)^{\frac{d}{2}} \cF_2 \left( k_u  \circ \kappa \right)
\end{equation}
between kernels and Weyl symbols of covariance operators. 
The non-negativity of the covariance operator, that is $( \cK_u \fy, \fy) \geqs 0$ for all $\fy \in \cS(\rr d)$, 
implies that the Weyl symbol is real-valued (see e.g. \cite[p.~30]{Cappiello1}): 
\begin{equation}\label{eq:RealWeylSymbol}
{\overline \sigma_u} = \sigma_u.
\end{equation}

From \cite[Proposition~4.3.2]{Grochenig1} it follows that $W(\wh g, \wh f) = W(g,f) \circ (-\J)$ for $f,g \in \cS(\rr d)$ where 
\begin{equation*}
\J =
\left(
\begin{array}{cc}
0 & I_d \\
-I_d & 0
\end{array}
\right) \in \rr {2d \times 2d}
\end{equation*}
is the matrix which plays a fundamental role in symplectic linear algebra \cite{Folland1,Grochenig1}.  
From Lemma \ref{lem:FourierCovOp} it follows that for $f,g \in \cS(\rr d)$
\begin{align*}
(2 \pi)^{-\frac{d}{2}} \left( \sigma_u, W(g,f) \right)
& = \left( \cK_u f, g \right)
= \left( \cK_{\wh u} \wh f, \wh g \right) \\
& = (2 \pi)^{-\frac{d}{2}} \left( \sigma_{\wh u}, W(\wh g, \wh f) \right) 
= (2 \pi)^{-\frac{d}{2}} \left( \sigma_{\wh u} \circ \J, W(g,f) \right)
\end{align*}
since $\J^{-1} = -\J$, and thus
\begin{equation}\label{eq:WeylSymbolFourierGSP}
\sigma_{\wh u} = \sigma_u \circ (- \J ). 
\end{equation}

If $\cK_u$ is the covariance operator of a stationary $\gsp$ $u$
then by
\eqref{eq:kernelstat}, 
\eqref{eq:SpectralMeasure} 
and \eqref{eq:KernelWeylsymbol}
its Weyl symbol is
\begin{equation}\label{eq:WeylSymbolStat}
\sigma_u = (2 \pi)^{\frac{d}{2}} 1 \otimes \wh h_u
= 1 \otimes \mu_u.
\end{equation}
If $u \in \cL ( \cS(\rr d) , L^2(\Omega) )$ is a frequency stationary $\gsp$ (cf. Definition \ref{def:FrequencyStat})
then by Lemma \ref{lem:FourierCovOp}, 
$\cK_{\wh u} = h_{\wh u} *$, 
\eqref{eq:convolutionFourier}, 
\eqref{eq:SpectralMeasure} 
and $\check f(x) = f(-x)$,
\begin{align*}
\cK_u f 
& = \cF^{-1} \left( h_{\wh u} * \wh f \right)
= \check{ \mu }_{\wh u} f
\end{align*}
which gives the corresponding Weyl symbol
\begin{equation}\label{eq:WeylSymbolFrequencyStat}
\sigma_u 
= \check{ \mu }_{\wh u} \otimes 1.
\end{equation}

The following statement will be needed in Section \ref{sec:wignergeneral}. 

\begin{lem}\label{lem:DensityWigner}
The space of finite linear combinations of functions of the form $W(f, g ) \in \cS(\rr {2d})$ with $f,g \in \cS(\rr d)$
is dense in $\cS(\rr {2d})$.
\end{lem}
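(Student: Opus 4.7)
The plan is to reduce the density statement to the classical fact that the algebraic tensor product $\cS(\rr d) \otimes \cS(\rr d)$ is dense in $\cS(\rr {2d})$, by exploiting the representation of the cross-Wigner distribution in \eqref{eq:WignerSchwartz}.

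First I would observe that the pullback $\Phi: F \mapsto F \circ \kappa$, where $\kappa$ is the invertible matrix in \eqref{eq:kappa}, is a topological automorphism of $\cS(\rr {2d})$, and likewise the partial Fourier transform $\cF_2$ is a topological automorphism of $\cS(\rr {2d})$. Hence $T = \cF_2 \circ \Phi$ is a topological automorphism of $\cS(\rr {2d})$, and by \eqref{eq:WignerSchwartz} we have $W(g,f) = T(g \otimes \overline f)$ for all $f, g \in \cS(\rr d)$. A continuous linear bijection maps dense sets to dense sets, so the density of the linear span of $\{ W(g,f): f,g \in \cS(\rr d) \}$ in $\cS(\rr {2d})$ is equivalent to the density of the linear span of $\{ g \otimes \overline f : f,g \in \cS(\rr d) \}$ in $\cS(\rr {2d})$.

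Since complex conjugation is a bijection of $\cS(\rr d)$ onto itself (it is not complex linear, but for the set-theoretic image this is irrelevant: writing $\overline f = h$ with $h \in \cS(\rr d)$ arbitrary), the set above equals $\{ g \otimes h : g, h \in \cS(\rr d) \}$. Its finite linear span is precisely the algebraic tensor product $\cS(\rr d) \otimes \cS(\rr d)$.

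It thus remains to invoke the classical density $\cS(\rr d) \otimes \cS(\rr d) \subseteq \cS(\rr {2d})$ dense, which is a consequence of the nuclearity of $\cS$. The most hands-on route is to use the Hermite basis: the Hermite functions $\{h_\alpha\}_{\alpha \in \non d}$ form an unconditional Schauder basis of $\cS(\rr d)$, and the tensor products $\{ h_\alpha \otimes h_\beta \}_{(\alpha,\beta) \in \non {2d}}$ form a Schauder basis of $\cS(\rr {2d})$ (with respect to any of the equivalent families of seminorms, e.g.\ the ones associated with the harmonic oscillator). Each basis element is of tensor type, so any $F \in \cS(\rr {2d})$ is a Schwartz-convergent series of elements of $\cS(\rr d) \otimes \cS(\rr d)$, which proves the density and hence the lemma. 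There is no real obstacle; the only care needed is in handling the complex conjugation, which is done by a relabelling of the second factor.
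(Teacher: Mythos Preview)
Your proof is correct and follows essentially the same approach as the paper: reduce the statement, via the topological automorphisms $\cF_2$ and composition with $\kappa$, to the density of the span of elementary tensors $f \otimes \overline g$ in $\cS(\rr{2d})$. The only cosmetic difference is that the paper dispatches this last step by citing \cite[Theorem~V.13]{Reed1}, whereas you supply an explicit argument via the Hermite Schauder basis.
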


\begin{proof}
The partial Fourier transform $\cF_2$ and composition with the invertible matrix $\kappa \in \rr {2d \times 2d}$ defined by \eqref{eq:kappa}
are isomorphisms on $\cS(\rr {2d})$. 
The claim thus reduces to the density in $\cS(\rr {2d})$ of finite linear combinations of functions of the form $f \otimes \overline g \in \cS(\rr {2d})$
with $f,g \in \cS(\rr d)$. 
The latter claim is a consequence of \cite[Theorem~V.13]{Reed1}.
\end{proof}

\subsection{White noise generalized stochastic processes}\label{subsec:WhiteNoise}

White noise is a fundamental concept in probability and engineering, with several different meanings \cite{Bardenet1,Biagini1,Feichtinger2,Flandrin3,Gelfand4,Hormann1,Keville1,Kuo1}. 
We will use the following definition \cite{Feichtinger2,Hormann1,Keville1}. 

\begin{defn}\label{def:WhiteNoise}
If $u \in \cL ( \cS(\rr d) , L^2(\Omega) )$ is a stationary $\gsp$ such that 
$(k_u, \fy \otimes \overline \psi) = p (\psi,\fy)_{L^2}$ for $\psi, \fy \in \cS(\rr d)$
with $p > 0$
then $u$ is called a white noise $\gsp$ with power $p$. 
\end{defn}

Definition \ref{def:WhiteNoise} can equivalently be formulated as 
$\cK_u = p I$, that is the covariance operator equals identity times $p$, or 
as the requirement that its Weyl symbol equals the power constantly: $\sigma_u = p$, 
or as $k_u = p (1 \otimes \delta_0 ) \circ \kappa^{-1}$ in terms of the Schwartz kernel.  
Finally we may say that the spectral measure $\mu_u = p (2 \pi)^{\frac{d}2} \wh{ \delta_0} = p$ is constantly equal to the power. 

Lemma \ref{lem:FourierCovOp} implies that $\cK_{\wh u} = p I$ if $u$ is white noise with power $p > 0$. 
Thus $u$ and $\wh u$ are simultaneously white noise with power $p > 0$.

\section{Gaussian symmetric tempered generalized stochastic processes}\label{sec:Gaussiantempered}

Let $u \in \cL ( \cS(\rr d) , L^2(\Omega) )$ be a tempered GSP. 
It is called Gaussian \cite{Gelfand4}
if for any finite set 
$\Phi=\{ \fy_j \}_{j=1}^{n} \subseteq \cS (\rr d)$, the $2n$-dimensional vector consisting 
of the real and the imaginary parts of the vector
$U_\Phi = \left( (u, \fy_1), \dots, (u, \fy_n) \right) \in \cc n$, that is 
$\wt U_\Phi = \left( \re \, U_\Phi, \ \im \, U_\Phi  \right) \in \rr {2n}$, 
$\wt U_\Phi: \Omega \to \rr {2n}$, 
has a $2n$-dimensional real Gaussian probability density. 
Define the mean $m_\Phi = \bE \wt U_\Phi$. 
In the case when the covariance matrix  
\begin{equation}
C_\Phi = \bE \left( \left( \wt U_\Phi - m_\Phi \right) \left( \wt U_\Phi - m_\Phi \right) ^T \right) \in \rr {2 n \times 2 n}
\end{equation}
is non-singular we then have for 
any $A \in \cB(\rr {2n})$ (the Borel $\sigma$-algebra),
that the probability for 
$\wt U_\Phi \in A$ is given by 
\begin{equation*}
(2\pi)^{-d} (\det C_\Phi)^{-1/2} \int_{A} 
e^{-\frac{1}{2} \la x - m_\Phi,  C_\Phi^{-1} (x - m_\Phi) \ra } \dd x. 
\end{equation*}
More generally, Gaussianity can be defined also when the covariance matrix is singular
using characteristic functions \cite{Doob1},
by requiring that for any finite set $\Phi=\{ \fy_j \}_{j=1}^{n} \subseteq \cS (\rr d)$
and any $y \in \rr {2n}$
we have 
\begin{equation*}
\bE \left( e^{i \la y, \wt U_\Phi \ra } \right)
= e^{ i \la y, m_\Phi \ra - \frac{1}{2} \la y, C_\Phi y \ra}.
\end{equation*}

We assume that complex-valued GSPs $u$ are \textit{symmetric} which means that $\lambda u$ and $u$ have identical probability
distribution laws for each $\lambda \in \co$ such that $|\lambda|=1$ 
\cite{Janson1,Miller1}. This is equivalent to 
$\bE (u,\fy)  = 0$ for all $\fy \in \cS(\rr d)$
and 
\begin{equation}\label{eq:covsymmetric}
\bE (  (u,\fy) (u,\psi) ) = 0 \quad \forall \fy,\psi \in \cS(\rr d),
\end{equation}
cf. \cite{Doob1,Janson1,Miller1}. 
Thus a symmetric GSP is automatically zero mean, that is, it belongs to GSP$_0$.

Let $x_1, \dots, x_n$ be complex-valued zero mean jointly Gaussian 
random variables. Isserlis' or Wick's theorem \cite{Janson1}
says that
\begin{equation*}
\bE(x_1 \cdots x_n) = \sum \prod_k \bE(x_{i_k} x_{j_k})
\end{equation*}
where the sum is taken over all partitions of $\{1, \dots ,n \}$ into 
disjoint pairs $\{i_k, j_k \}$. Thus for $n=4$ we have 
\begin{equation}\label{eq:wick}
\bE (x_1 x_2 x_3 x_4) 
= \bE(x_1 x_2) \bE(x_3 x_4) + \bE(x_1 x_3) \bE(x_2 x_4) + \bE(x_1 x_4) \bE(x_2 x_3). 
\end{equation}

Let  $u \in \cL ( \cS(\rr d) , L_0^2(\Omega) )$ be Gaussian and symmetric.
We define the sesquilinear form $U = u \otimes \overline u$ 
as
\begin{equation}\label{eq:Udef}
(U, \fy \otimes \overline \psi ) 
= ( u \otimes \overline u, \fy \otimes \overline \psi )
= (u,\fy) \overline{ (u,\psi) }, \quad \fy,\psi \in \cS(\rr d). 
\end{equation}
From \eqref{eq:wick} it follows that $\bE | (U, \fy \otimes \overline \psi ) |^2 = \bE \left( | (u,\fy) |^2 | (u,\psi) |^2\right)$ is finite which means that the form $U$ has values in $L^2(\Omega)$. 
Thus $U$ extends to
\begin{equation}\label{eq:Ugsp}
U \in \cL ( \cS(\rr {2d}) , L^2(\Omega) ).
\end{equation}
From \eqref{eq:crosscovdist} we identify
\begin{equation}\label{eq:EU}
\bE (U, \fy \otimes \overline \psi ) = ( k_ u,\fy \otimes  \overline{\psi }), \quad \fy,\psi \in \cS(\rr d),  
\end{equation}
so if we define 
\begin{equation}\label{eq:U0}
U_0 = U - \bE U = U - k_u
\end{equation}
then $\bE U_0 = 0$, that is, the sesquilinear form $U_0$ has values in $L_0^2(\Omega)$. 
By \eqref{eq:Ugsp}, \eqref{eq:EU} and the Schwartz kernel theorem \cite{Gelfand4}, \cite[Theorem~5.2.1]{Hormander1}
the form $U_0$ extends to a tempered GSP$_0$, still denoted $U_0 \in \cL ( \cS(\rr {2d}) , L_0^2(\Omega) )$. 
Again by the Schwartz kernel theorem it follows that its covariance distribution 
satisfies $k_{U_0} \in  \cS' (\rr {4d})$. 

Let $\fy_j,\psi_j \in \cS(\rr d)$ for $j = 1,2$.
From \eqref{eq:Udef} and \eqref{eq:U0} we get
\begin{align*}
& ( U_0, \fy_1 \otimes \overline \psi_1 ) \overline{ (U_0, \fy_2 \otimes \overline \psi_2 ) } \\
& = 
\left( (u,\fy_1) \overline{ (u,\psi_1) }  - ( k_u, \fy_1 \otimes \overline \psi_1 ) \right)
\overline{ \left( (u,\fy_2) \overline{ (u,\psi_2) }  - ( k_u, \fy_2 \otimes \overline \psi_2 ) \right) }
\end{align*}
which gives using 
\eqref{eq:covsymmetric} and \eqref{eq:wick}
\begin{align*}
& \bE \left( 
(U_0, \fy_1 \otimes \overline \psi_1 ) \overline{ (U_0, \fy_2 \otimes \overline \psi_2 ) }
\right) \\
& \bE \left( 
(u,\fy_1) \overline{ (u,\psi_1)  (u,\fy_2)}  (u,\psi_2)
\right) 
- (k_u, \fy_1 \otimes \overline \psi_1)  
\overline{ (k_u, \fy_2 \otimes \overline \psi_2) } \\ 
& = 
\bE \left( (u,\fy_1) \overline{ (u,\psi_1) }  \right) 
\bE \left( \overline{ (u,\fy_2) } (u,\psi_2)  \right) 
+ 
\bE \left( (u,\fy_1) \overline{ (u,\fy_2) }  \right) 
\bE \left( \overline{ (u,\psi_1) } (u,\psi_2)  \right) \\
& - (k_u, \fy_1 \otimes \overline \psi_1)  
\overline{ (k_u, \fy_2 \otimes \overline \psi_2) } \\
& = 
(k_u, \fy_1 \otimes \overline \fy_2)  
\overline{ (k_u, \psi_1 \otimes \overline \psi_2) }.
\end{align*}

Thus
\begin{align*}
( k_{U_0}, \fy_1 \otimes \overline \psi_1 \otimes \overline \fy_2 \otimes \psi_2 )
& = ( k_u \otimes \overline{k_u}, \fy_1 \otimes \overline \fy_2 \otimes \overline \psi_1 \otimes \psi_2 ). 
\end{align*}
By the density of simple tensors in $\cS(\rr {4d})$ (cf. \cite[Theorem~V.13]{Reed1}) it follows that 
\begin{equation}\label{eq:kU0}
k_{U_0} = \left( k_u \otimes \overline{k_u} \right) \circ p_{2,3}
\end{equation}
where $p_{2,3}: \rr {4d} \to \rr {4d}$ is the linear coordinate transformation
\begin{equation}\label{eq:p23}
p_{2,3}(x_1, y_1, x_2, y_2) = (x_1, x_2, y_1, y_2), \quad x_1, x_2, y_1, y_2 \in \rr d, 
\end{equation}
that is, the transposition of the second and the third $d$ coordinates in $\rr {4d}$.

\section{The Wigner distribution of tempered symmetric Gaussian $\gsp$s}\label{sec:wignergeneral}

The Wigner distribution of a deterministic Schwartz function $f$ is defined by \eqref{eq:WignerSchwartz} with $g = f$. 
By \eqref{eq:kappainv} and the principles of distribution theory \cite{Hormander1} the Wigner distribution $W(u) \in \cS'(\rr {2d})$ of $u \in \cS'(\rr d)$
can be defined naturally as 
\begin{equation}\label{eq:Wignertempdist}
\left( W(u), \Phi \right)
= \left( u \otimes \overline u, \left( \cF_2^{-1} \Phi \right) \circ \kappa^{-1} \right), \quad \Phi \in \cS(\rr {2d}). 
\end{equation}

Now we want to define the Wigner distribution of a tempered, Gaussian and symmetric $\gsp$
$u \in \cL ( \cS(\rr d) , L_0^2(\Omega) )$.
Following \eqref{eq:Wignertempdist} we define the Wigner distribution $W(u)$ of $u$ as 
\begin{equation}\label{eq:Wdefgsp}
\left( W(u), \Phi \right)
= \left( U, \left( \cF_2^{-1} \Phi \right) \circ \kappa^{-1} \right), \quad \Phi \in \cS(\rr {2d}), 
\end{equation}
with $U = u \otimes \overline u \in \cL ( \cS(\rr {2d} ) , L^2(\Omega) )$ defined by \eqref{eq:Udef}.
It follows that $W(u) \in \cL ( \cS(\rr {2d} ) , L^2(\Omega) )$. 
Moreover \eqref{eq:EU} and \eqref{eq:KernelWeylsymbol} give for $\Phi \in \cS(\rr {2d})$
\begin{equation}\label{eq:MeanWigner}
\begin{aligned}
\bE \left( W(u), \Phi \right)
& = \bE  \left( U, \left( \cF_2^{-1} \Phi \right) \circ \kappa^{-1} \right) 
= \left( k_u, \left( \cF_2^{-1} \Phi \right) \circ \kappa^{-1} \right) \\
& = \left( \cF_2 \left( k_u \circ \kappa \right), \Phi \right) 
= (2 \pi)^{-\frac{d}{2}} \left( \sigma_u, \Phi \right) 
\end{aligned}
\end{equation}
and thus 
\begin{equation}\label{eq:EW}
\bE W(u) = (2 \pi)^{-\frac{d}{2}} \sigma_u. 
\end{equation}
The expected value of the Wigner distribution $W(u)$ hence equals 
the Weyl symbol of the covariance operator of $u$, times $(2 \pi)^{-\frac{d}{2}}$ \cite{Hormann1,Janssen1}.

If $u \in \cL ( \cS(\rr d) , L_0^2(\Omega) )$ is Gaussian and symmetric
then we define its zero mean Wigner distribution using  \eqref{eq:Udef}, 
\eqref{eq:EU} and \eqref{eq:U0} as
\begin{equation}\label{eq:W0}
( W_0(u), \Phi ) = \left( U_0, \left( \cF_2^{-1} \Phi \right) \circ \kappa^{-1} \right), \quad \Phi \in \cS(\rr {2d}), 
\end{equation}
which means that $W_0(u) \in \cL ( \cS(\rr {2d} ) , L_0^2(\Omega) )$ is a tempered $\gsp_0$ defined on $\rr {2d}$.
By \eqref{eq:MeanWigner} we have
\begin{equation*}
\left( \bE U, \left( \cF_2^{-1} \Phi \right) \circ \kappa^{-1} \right)
= \left( k_u, \left( \cF_2^{-1} \Phi \right) \circ \kappa^{-1} \right)
= \left( \bE W(u), \Phi \right)
\end{equation*}
and it follows from \eqref{eq:EW} that  
\begin{equation}\label{eq:WignerZMWigner}
W_0(u) = W(u) - (2 \pi)^{-\frac{d}{2}} \sigma_u. 
\end{equation}

Let $\sigma_W \in \cS' (\rr {4d})$ denote the Weyl symbol of the covariance operator of $W_0(u)$, and $k_{W_0(u)}  \in \cS' (\rr {4d})$ its Schwartz kernel. 
By \eqref{eq:KernelWeylsymbol} we have 
\begin{equation}\label{eq:KernelWeylsymbolWigner}
k_{W_0(u)} = (2 \pi)^{-d} ( \cF_2^{-1} \sigma_W ) \circ \kappa_2^{-1}
\end{equation}
where 
\begin{equation}\label{eq:kappa2}
\kappa_2 = 
\left(
\begin{array}{ll}
I_{2 d} & \frac12 I_{2 d} \\
I_{2 d} & - \frac12 I_{2 d}
\end{array}
\right) \in \rr {4 d \times 4 d}
\end{equation}
and
\begin{equation}\label{eq:kappa2inv}
\kappa_2^{-1} = 
\left(
\begin{array}{ll}
\frac12 I_{2 d} & \frac12 I_{2 d} \\
I_{2 d} & - I_{2 d}
\end{array}
\right) \in \rr {4 d \times 4 d}.
\end{equation}

The next lemma relates the Wigner distributions of $u$ and $\wh u$. 

\begin{lem}\label{lem:WignerFourier}
If $u \in \cL ( \cS(\rr d) , L_0^2(\Omega) )$ is Gaussian and symmetric then 
\begin{align}
W( \wh u) & = W( u) \circ (-\J) \quad \mbox{and} \quad \label{eq:WignerFourierGSP} \\
W_0( \wh u) & = W_0( u) \circ (-\J). \label{eq:WignerFourierGSPZM}
\end{align}
\end{lem}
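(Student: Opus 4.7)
The plan is to reduce identity \eqref{eq:WignerFourierGSP} to the deterministic identity $W(\wh g, \wh f) = W(g,f)\circ(-\J)$ recalled just before \eqref{eq:WeylSymbolFourierGSP}, using the density supplied by Lemma~\ref{lem:DensityWigner}. Since both $W(\wh u)$ and $W(u)\circ(-\J)$ belong to $\cL(\cS(\rr{2d}), L^2(\Omega))$, it suffices to check the equality against test functions of the special form $\Phi = W(g,f)$ with $f, g \in \cS(\rr d)$.

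For such a $\Phi$, definition \eqref{eq:WignerSchwartz} gives $(\cF_2^{-1}\Phi)\circ\kappa^{-1} = g\otimes\overline f$. Inserting this into \eqref{eq:Wdefgsp} and combining \eqref{eq:Udef} with the defining property $(\wh u,\psi) = (u,\cF^{-1}\psi)$ of the Fourier transform of a tempered $\gsp$, the left-hand side becomes
$$
(W(\wh u), W(g,f)) = (\wh u \otimes \overline{\wh u}, g\otimes \overline f) = (u, \cF^{-1} g)\,\overline{(u, \cF^{-1} f)}.
$$
For the right-hand side, the change-of-variables rule $(T\circ A, \Phi) = |\det A|^{-1}(T, \Phi\circ A^{-1})$ with $A = -\J$ — so that $|\det A| = 1$ and $A^{-1} = \J$ in view of $\J^2 = -I$ — reduces matters to computing $(W(u), W(g,f)\circ \J)$. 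Applying the deterministic identity with $g, f$ replaced by $\cF^{-1} g, \cF^{-1} f$ yields $W(g,f) = W(\cF^{-1} g, \cF^{-1} f)\circ(-\J)$, and composing with $\J$ gives $W(g,f)\circ \J = W(\cF^{-1} g, \cF^{-1} f)$. A second application of the definition of $W(u)$ then produces exactly $(u, \cF^{-1} g)\,\overline{(u, \cF^{-1} f)}$, matching the left-hand side and establishing \eqref{eq:WignerFourierGSP}.

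Identity \eqref{eq:WignerFourierGSPZM} is then immediate: applying \eqref{eq:WignerZMWigner} to both $u$ and $\wh u$ one has $W_0(v) = W(v) - (2\pi)^{-d/2}\sigma_v$ for $v \in \{u, \wh u\}$, while \eqref{eq:WeylSymbolFourierGSP} provides $\sigma_{\wh u} = \sigma_u\circ(-\J)$. Linearity of composition with $-\J$ combined with \eqref{eq:WignerFourierGSP} then yields \eqref{eq:WignerFourierGSPZM}. The only delicate step in the whole argument is the bookkeeping of composition conventions — conjugate-linear pairings, the factor $|\det(-\J)| = 1$, and the relations $(-\J)^{-1} = \J$ and $\J^2 = -I$ — but no serious obstacle is anticipated, since the substantive content is entirely the deterministic identity together with the density lemma.
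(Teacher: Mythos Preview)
Your proof is correct and follows essentially the same approach as the paper: both test against functions of the form $W(g,f)$, invoke the deterministic identity $W(\wh g,\wh f) = W(g,f)\circ(-\J)$ together with the density Lemma~\ref{lem:DensityWigner} to obtain \eqref{eq:WignerFourierGSP}, and then combine \eqref{eq:WignerZMWigner} with \eqref{eq:WeylSymbolFourierGSP} for \eqref{eq:WignerFourierGSPZM}. The only cosmetic difference is that the paper organizes the computation as $(W(\wh u)\circ\J, W(f,g)) = (W(u), W(f,g))$ rather than comparing $(W(\wh u),\Phi)$ and $(W(u)\circ(-\J),\Phi)$ directly.
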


\begin{proof}
Let $f,g \in \cS(\rr d)$. 
By \cite[Proposition~4.3.2]{Grochenig1} we have $W( \wh f, \wh g) = W(f,g) \circ (-\J)$.
Combining this with \eqref{eq:Wignertempdist} and \eqref{eq:WignerSchwartz} we get 
\begin{align*}
\left( W(\wh u) \circ \J, W(f,g) \right)
& = \left( W(\wh u), W(\wh f, \wh g) \right)
= \left( \wh u \otimes \overline{\wh u}, \left( \cF_2^{-1} W(\wh f, \wh g) \right) \circ \kappa^{-1} \right) \\
& = \left( \wh u \otimes \overline{\wh u}, \wh f \otimes \overline{\wh g} \right)
= \left( u \otimes \overline{u}, f \otimes \overline{g} \right)
= \left( W(u), W(f,g) \right).
\end{align*}
By Lemma \ref{lem:DensityWigner} this proves \eqref{eq:WignerFourierGSP}. 
Finally \eqref{eq:WignerFourierGSPZM} is a consequence of \eqref{eq:WignerFourierGSP}, 
\eqref{eq:WignerZMWigner}
and \eqref{eq:WeylSymbolFourierGSP}. 
\end{proof}

Referring to \eqref{eq:kappa} and \eqref{eq:kappainv} we next introduce the block matrix notations
\begin{equation*}
\kappa \otimes \kappa = 
\left(
\begin{array}{llll}
I_d & \frac12 I_d & 0 & 0 \\
I_d & - \frac12 I_d & 0 & 0 \\
0 & 0 & I_d & \frac12 I_d \\
0 & 0 & I_d & -\frac12 I_d 
\end{array}
\right) \in \rr {4d \times 4d}, 
\end{equation*}
\begin{equation*}
\kappa^{-1} \otimes \kappa^{-1} = 
\left(
\begin{array}{llll}
\frac12 I_d & \frac12 I_d & 0 & 0 \\
I_d & -  I_d & 0 & 0 \\
0 & 0 & \frac12 I_d & \frac12 I_d \\
0 & 0 & I_d & - I_d 
\end{array}
\right) \in \rr {4d \times 4d}, 
\end{equation*}
and 
\begin{equation*}
I \otimes \J = 
\left(
\begin{array}{ll}
I_{2d} & 0 \\
0 & \J 
\end{array}
\right) \in \rr {4d \times 4d}.
\end{equation*}

With these ingredients and \eqref{eq:p23} we can formulate the following formula which will be 
crucial for our main theorem. 

\begin{lem}\label{lem:Fouriertensor}
If $\Phi, \Psi \in \cS(\rr {2d})$ then 
\begin{align*}
\kF_2 \kF_4^{-1} \Big( \left( \cF_2^{-1} \Phi \otimes \cF_2 \overline{ \Psi } \right) \circ (\kappa^{-1} \otimes \kappa^{-1}) \circ p_{2,3} \circ ( \kappa \otimes \kappa ) \Big) 
= W(\Phi, \Psi ) \circ \left( I \otimes \J \right) \circ \kappa_2^{-1}.
\end{align*}
\end{lem}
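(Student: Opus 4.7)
The plan is to verify the identity pointwise on $(a,b,c,d) \in \rr {4d}$ for arbitrary $\Phi, \Psi \in \cS(\rr {2d})$; since both sides define elements of $\cS(\rr {4d})$ that depend linearly on $\Phi$ and conjugate-linearly on $\Psi$, equality at a generic point suffices.

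First I would unfold the right-hand side. A direct calculation gives $\kappa_2^{-1}(a,b,c,d) = (\tfrac{a+c}{2}, \tfrac{b+d}{2}, a-c, b-d)$, and then $I \otimes \J$ sends this to the point $(\tfrac{a+c}{2}, \tfrac{b+d}{2}, b-d, c-a)$. Inserting this into the integral definition of $W(\Phi, \Psi)$ on $\rr {4d}$ yields a double integral over $(y_1, y_2) \in (\rr d)^2$ of $\Phi(\tfrac{a+c}{2} + \tfrac{y_1}{2}, \tfrac{b+d}{2} + \tfrac{y_2}{2}) \overline{\Psi(\tfrac{a+c}{2} - \tfrac{y_1}{2}, \tfrac{b+d}{2} - \tfrac{y_2}{2})}$ against the phase $e^{i\la y_1, d-b\ra + i\la y_2, a-c\ra}$, normalized by $(2\pi)^{-d}$.

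Next I would unfold the left-hand side. The composition $(\kappa^{-1} \otimes \kappa^{-1}) \circ p_{2,3} \circ (\kappa \otimes \kappa)$ sends $(a,b,c,d)$ to the 4-tuple $(e_1, e_2, e_3, e_4)$ with $e_1 = \tfrac{a+c}{2} + \tfrac{b+d}{4}$, $e_3 = \tfrac{a+c}{2} - \tfrac{b+d}{4}$, $e_2 = (a-c) + \tfrac{b-d}{2}$, $e_4 = (a-c) - \tfrac{b-d}{2}$; note that $e_1, e_3$ depend only on $a+c$ and $b+d$, while $e_2, e_4$ depend only on $a-c$ and $b-d$. Expressing $\cF_2^{-1}\Phi$ and $\cF_2\overline{\Psi}$ as integrals in auxiliary variables $\eta, \mu \in \rr d$ and then applying $\kF_2 \kF_4^{-1}$ at an evaluation point $(a,\beta,c,\delta)$ produces a four-fold integral over $(b, d, \eta, \mu)$. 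The change of variables $(b,d) \mapsto (P,Q) := (b+d, b-d)$ separates the dependence: $P$ enters the amplitude through $e_1, e_3$, whereas $Q$ appears only in a phase with linear coefficient $\tfrac12(\eta + \mu - \beta - \delta)$. Integrating out $Q$ produces the Dirac delta $\delta(\eta + \mu - \beta - \delta)$, and after eliminating $\mu$, substituting $P = 2R$ and $\eta = \tfrac{\beta+\delta}{2} + \tfrac{\nu}{2}$, and renaming $(\beta,\delta)$ back to $(b,d)$, the remaining integral is precisely the one computed for the right-hand side.

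The main obstacle is not conceptual but combinatorial, namely tracking the many normalization constants. The four partial Fourier transforms together contribute $(2\pi)^{-2d}$, the Jacobian of $(b,d) \mapsto (P,Q)$ contributes $2^{-d}$, and the delta-producing $Q$-integral contributes $(4\pi)^d$; the rescalings $P = 2R$ and $\eta = \tfrac{\beta+\delta}{2} + \tfrac{\nu}{2}$ cancel. The net prefactor is the expected $(2\pi)^{-d}$ of the cross-Wigner integral, confirming the asserted identity.
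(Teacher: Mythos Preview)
Your proposal is correct and is essentially the same direct pointwise verification as in the paper: both arguments compute the composite matrix $(\kappa^{-1}\otimes\kappa^{-1})\circ p_{2,3}\circ(\kappa\otimes\kappa)$, write everything as oscillatory integrals, and reduce via linear changes of variables. The only organizational difference is that the paper first rewrites $\cF_2^{-1}\Phi\otimes\cF_2\overline{\Psi}$ in terms of $\kF_2^{-1}\kF_3^{-1}W(\Phi,\Psi)$ and then applies $\kF_2\kF_4^{-1}$, whereas you expand both sides directly into integrals with integrand $\Phi(\cdot)\overline{\Psi(\cdot)}$ and match them via the $(b,d)\mapsto(P,Q)$ substitution and the resulting delta; your bookkeeping of the constants $(2\pi)^{-2d}\cdot 2^{-d}\cdot(4\pi)^d=(2\pi)^{-d}$ is accurate.
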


\begin{proof}
First we note that if $f,g \in \cS(\rr {2d})$ in \eqref{eq:WignerSchwartz}  then $\cF_2$ denotes the partial Fourier
transform with respect to the second $2d$ coordinate in $\rr {4d}$. It can be written $\kF_3 \kF_4$
where $\kF_j$ denotes the partial Fourier transform with respect to the $j$th $d$ variables in $\rr {4d}$.

Hence from \eqref{eq:WignerSchwartz} we get
\begin{equation*}
 \cF_2^{-1} \Phi \otimes \cF_2 \overline{ \Psi }
 = \kF_2^{-1} \kF_4 \left( \Phi \otimes \overline{ \Psi } \right) 
 = \kF_2^{-1} \kF_4 \left( \left(  \kF_3^{-1}  \kF_4^{-1}  W \left(\Phi, \Psi \right) \right) \circ \kappa_2^{-1} \right) 
\end{equation*}
which gives for $x_1, \xi_1, x_2, \xi_2 \in \rr d$
\begin{equation}\label{eq:tensorproduct1}
\begin{aligned}
& \left( \cF_2^{-1} \Phi \otimes \cF_2 \overline{ \Psi } \right) (x_1, \xi_1, x_2, \xi_2) \\
& = (2 \pi)^{-d} \iint_{\rr {2d}} 
\kF_3^{-1}  \kF_4^{-1}  W(\Phi, \Psi ) 
\left( \frac12 \left( x_1 + x_2 \right), \frac12 \left( y_1 + y_2 \right), x_1 - x_2, y_1 - y_2 \right)
e^{i \left( \la y_1, \xi_1 \ra - \la y_2, \xi_2 \ra \right)}
\dd y_1 \dd y_2 \\
& = (2 \pi)^{-d} \iint_{\rr {2d}} 
\kF_3^{-1}  \kF_4^{-1}  W(\Phi, \Psi ) 
\left( \frac12 \left( x_1 + x_2 \right), u_1, x_1 - x_2, u_2 \right)
e^{i \left( \la u_1 + \frac12 u_2 , \xi_1 \ra - \la u_1 - \frac12 u_2, \xi_2 \ra \right)}
\dd u_1 \dd u_2 \\
& = \kF_2^{-1} \kF_3^{-1}  W(\Phi, \Psi ) 
\left( \frac12 \left( x_1 + x_2 \right), \xi_1 - \xi_2, x_1 - x_2, - \frac12 \left( \xi_1 + \xi_2 \right) \right). 
\end{aligned}
\end{equation}

We have 
\begin{equation*}
( \kappa^{-1} \otimes \kappa^{-1}) \circ p_{2,3} \circ ( \kappa \otimes \kappa )
= \left(
\begin{array}{llll}
\frac12 I_d & \frac14 I_d & \frac12 I_d & \frac14 I_d \\ 
I_d & \frac12 I_d & - I_d  & - \frac12 I_d \\
\frac12 I_d & - \frac14 I_d & \frac12 I_d & -\frac14 I_d \\
I_d & -\frac12 I_d & - I_d & \frac12 I_d 
\end{array}
\right) \in \rr {4d \times 4d}
\end{equation*}
which combined with \eqref{eq:tensorproduct1} finally yields
\begin{align*}
& \kF_2 \kF_4^{-1} \Big( \left( \cF_2^{-1} \Phi \otimes \cF_2 \overline{ \Psi } \right) \circ ( \kappa^{-1} \otimes \kappa^{-1}) \circ p_{2,3} \circ ( \kappa \otimes \kappa ) \Big) 
(x_1, \xi_1, x_2, \xi_2) \\
& = (2 \pi)^{-d} \iint_{\rr {2d}}
\cF_2^{-1} \Phi \left( \frac12 \left( x_1 + x_2 \right) + \frac14 \left( y_1 + y_2 \right), x_1 - x_2 +  \frac12 \left( y_1 - y_2 \right) \right)  \\
& \qquad \qquad \times \cF_2 \overline{ \Psi } \left( \frac12 \left( x_1 + x_2 \right) - \frac14 \left( y_1 + y_2 \right), x_1 - x_2  -  \frac12 \left( y_1 - y_2 \right) \right)
e^{i \left(  \la y_2, \xi_2 \ra  - \la y_1, \xi_1 \ra \right)} \dd y_1 \dd y_2 \\
& = (2 \pi)^{-d} \iint_{\rr {2d}}
\kF_2^{-1} \kF_3^{-1}  W(\Phi, \Psi ) 
\left( \frac12 \left( x_1 + x_2 \right), y_1 - y_2, \frac12 \left( y_1 + y_2 \right), - (x_1 - x_2) \right) \\
& \qquad \qquad \qquad \qquad \qquad \qquad \qquad \qquad \qquad \qquad \qquad \qquad  
\times e^{i \left(  \la y_2, \xi_2 \ra  - \la y_1, \xi_1 \ra \right)} \dd y_1 \dd y_2 \\
& = (2 \pi)^{-d} \iint_{\rr {2d}}
\kF_2^{-1} \kF_3^{-1}  W(\Phi, \Psi ) 
\left( \frac12 \left( x_1 + x_2 \right), u_2, u_1, - (x_1 - x_2) \right) \\
& \qquad \qquad \qquad \qquad \qquad \qquad \qquad \qquad \qquad \qquad   
\times e^{i \left(  \la u_1 - \frac12 u_2, \xi_2 \ra  - \la u_1 + \frac12 u_2, \xi_1 \ra \right)} \dd u_1 \dd u_2 \\
& = W(\Phi, \Psi ) 
\left( \frac12 \left( x_1 + x_2 \right), \frac12 \left( \xi_1 + \xi_2 \right), \xi_1 - \xi_2, - (x_1 - x_2) \right) \\
& = W(\Phi, \Psi ) 
\left( \left( I \otimes \J \right) \left( \frac12 \left( x_1 + x_2 \right), \frac12 \left( \xi_1 + \xi_2 \right), x_1 - x_2, \xi_1 - \xi_2 \right) \right) \\
& = W(\Phi, \Psi ) 
\left( \left( I \otimes \J \right) \circ \kappa_2^{-1} (x_1, \xi_1, x_2, \xi_2) \right). 
\end{align*}
\end{proof}

We have prepared the ground for the statement and short proof of our following main result. 

\begin{thm}\label{thm:symbol}
Suppose $u \in \cL ( \cS(\rr d ) , L_0^2(\Omega) )$ is a symmetric Gaussian tempered $\gsp$
and let $\sigma_u \in \cS' (\rr {2d})$ denote the Weyl symbol of its covariance operator. 
Define the zero mean Wigner distribution $W_0(u) \in \cL ( \cS(\rr {2d} ) , L_0^2(\Omega) )$ 
by \eqref{eq:Udef}, \eqref{eq:U0} and \eqref{eq:W0}.  
If $\sigma_W \in \cS' (\rr {4d})$ denotes the Weyl symbol of the covariance operator of $W_0(u)$ 
then 
\begin{equation*}
\sigma_W = 
\left( \sigma_u \otimes \sigma_u \right) \circ \kappa_2 \circ \left( I \otimes (-\J) \right).
\end{equation*}
\end{thm}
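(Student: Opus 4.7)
The plan is to unfold all the definitions, pair the covariance distribution of $W_0(u)$ against test functions of the special form $W(\Phi,\Psi)$, and then invoke Lemma \ref{lem:Fouriertensor} together with the density result Lemma \ref{lem:DensityWigner} to conclude.

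First, I fix $\Phi,\Psi\in\cS(\rr{2d})$ and compute the covariance pairing $(k_{W_0(u)},\Phi\otimes\overline{\Psi})=\bE\big((W_0(u),\Phi)\overline{(W_0(u),\Psi)}\big)$. Using definition \eqref{eq:W0} and the elementary identity $\overline{\cF_2^{-1}\Psi}=\cF_2\overline{\Psi}$, this becomes a pairing of $k_{U_0}$ against $(\cF_2^{-1}\Phi\otimes\cF_2\overline{\Psi})\circ(\kappa^{-1}\otimes\kappa^{-1})$. Substituting the explicit formula \eqref{eq:kU0} for $k_{U_0}$, transferring $p_{2,3}$ to the test-function side (trivial Jacobian), and then replacing $k_u$ and $\overline{k_u}$ via \eqref{eq:KernelWeylsymbol} --- crucially using the reality \eqref{eq:RealWeylSymbol} of $\sigma_u$ so that $\overline{k_u}=(2\pi)^{-d/2}(\cF_2\sigma_u)\circ\kappa^{-1}$ --- and moving the emerging $\kappa\otimes\kappa$ across the pairing, I arrive at
\[(k_{W_0(u)},\Phi\otimes\overline{\Psi})=(2\pi)^{-d}\big(\cF_2^{-1}\sigma_u\otimes\cF_2\sigma_u,\;(\cF_2^{-1}\Phi\otimes\cF_2\overline{\Psi})\circ(\kappa^{-1}\otimes\kappa^{-1})\circ p_{2,3}\circ(\kappa\otimes\kappa)\big).\]

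At this point Lemma \ref{lem:Fouriertensor} applies verbatim to the second argument, rewriting it as $\kF_2^{-1}\kF_4\big[W(\Phi,\Psi)\circ(I\otimes\J)\circ\kappa_2^{-1}\big]$. Moving $\kF_2^{-1}$ and $\kF_4$ to the first argument by Parseval, the outer partial Fourier transforms cancel the $\cF_2^{-1}$ and $\cF_2$ respectively, collapsing the first factor to $\sigma_u\otimes\sigma_u$. A final change of variables, using $\J^{-1}=-\J$ and the fact that all matrices in play have Jacobian determinants of absolute value $1$, moves the coordinate transformation to the symbol side and yields
\[(k_{W_0(u)},\Phi\otimes\overline{\Psi})=(2\pi)^{-d}\big((\sigma_u\otimes\sigma_u)\circ\kappa_2\circ(I\otimes(-\J)),\;W(\Phi,\Psi)\big).\]
Comparing with the Weyl--Wigner identity $(k_{W_0(u)},\Phi\otimes\overline{\Psi})=(2\pi)^{-d}(\sigma_W,W(\Phi,\Psi))$ obtained from \eqref{eq:wignerweyl} applied on $\rr{2d}$, and appealing to Lemma \ref{lem:DensityWigner} (with $d$ replaced by $2d$) to get density of the span of $\{W(\Phi,\Psi)\}$ in $\cS(\rr{4d})$, the claim follows.

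The main obstacle will be the variable bookkeeping: at each step one must keep careful track of which partial Fourier transform acts on which of the four $d$-dimensional coordinate blocks, and verify that every Jacobian determinant along the way has absolute value one, so that no extraneous constants slip in. The structural point is that Lemma \ref{lem:Fouriertensor} is tailor-made to unscramble exactly the composite $(\kappa^{-1}\otimes\kappa^{-1})\circ p_{2,3}\circ(\kappa\otimes\kappa)$ forced on us by the nested definitions of $k_{U_0}$ and of $W(u)$, while the reality \eqref{eq:RealWeylSymbol} of $\sigma_u$ is what allows the complex conjugate in $\overline{k_u}$ to disappear cleanly and produce an unconjugated tensor $\sigma_u\otimes\sigma_u$ on the right-hand side.
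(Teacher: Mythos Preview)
Your proposal is correct and follows essentially the same route as the paper's own proof: both unfold the definitions to reduce $(k_{W_0(u)},\Phi\otimes\overline\Psi)$ to a pairing of $\sigma_u\otimes\sigma_u$ against the expression handled by Lemma~\ref{lem:Fouriertensor}, use the reality \eqref{eq:RealWeylSymbol} of $\sigma_u$ to eliminate the conjugate on $\overline{k_u}$, and conclude via Lemma~\ref{lem:DensityWigner}. The only difference is cosmetic ordering---you move $\kappa\otimes\kappa$ across the pairing before the partial Fourier transforms, whereas the paper moves them simultaneously---but the computation is the same.
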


\begin{proof}
A combination of 
\eqref{eq:WignerSchwartz},
\eqref{eq:KernelWeylsymbol}, 
\eqref{eq:RealWeylSymbol}, 
\eqref{eq:kU0}, 
\eqref{eq:W0}, 
\eqref{eq:KernelWeylsymbolWigner}, 
and Lemma \ref{lem:Fouriertensor}
gives for $\Phi, \Psi \in \cS(\rr {2d})$
\begin{align*}
& (2 \pi)^{-d} \left( \sigma_W, W(\Phi, \Psi) \right) \\
& = \left( k_{W_0(u)}, \Phi \otimes \overline \Psi \right) \\
& = \bE \left( \left( U_0, \left( \cF_2^{-1} \Phi \right) \circ \kappa^{-1} \right) \overline{ \left( U_0, \left( \cF_2^{-1} \Psi \right) \circ \kappa^{-1} \right) } \right) \\
& = \left( k_{U_0}, \left( \cF_2^{-1} \Phi \right) \circ \kappa^{-1} \otimes \overline{ \left( \cF_2^{-1} \Psi \right) \circ \kappa^{-1} } \right) \\
& = \left( k_u \otimes \overline k_u, \left( \cF_2^{-1} \Phi \otimes \cF_2 \overline{ \Psi } \right) \circ ( \kappa^{-1} \otimes \kappa^{-1}) \circ p_{2,3} \right) \\
& = (2 \pi)^{-d} \left( (\cF_2^{-1} \sigma_u ) \circ \kappa^{-1} \otimes (\cF_2  \sigma_u ) \circ \kappa^{-1}, \left( \cF_2^{-1} \Phi \otimes \cF_2 \overline{ \Psi } \right) \circ ( \kappa^{-1} \otimes \kappa^{-1}) \circ p_{2,3} \right) \\
& = (2 \pi)^{-d} \left( \sigma_u \otimes \sigma_u, 
\kF_2 \kF_4^{-1} \left( \left( \cF_2^{-1} \Phi \otimes \cF_2 \overline{ \Psi } \right) \circ ( \kappa^{-1} \otimes \kappa^{-1}) \circ p_{2,3} \circ ( \kappa \otimes \kappa ) \right) \right) \\
& = (2 \pi)^{-d} \left( \sigma_u \otimes \sigma_u, W(\Phi, \Psi ) \circ \left( I \otimes \J \right) \circ \kappa_2^{-1} \right) \\
& = (2 \pi)^{-d} \left( \left( \sigma_u \otimes \sigma_u \right) \circ \kappa_2 \circ \left( I \otimes (-\J) \right), W(\Phi, \Psi ) \right). 
\end{align*}
The claim now follows from Lemma \ref{lem:DensityWigner}. 
\end{proof}

If we write out the coordinates then Theorem \ref{thm:symbol} reads for $x_1, x_2, \xi_1, \xi_2 \in \rr d$
\begin{equation}\label{eq:WeylWignerCoord}
\sigma_W (x_1, x_2, \xi_1, \xi_2) = 
\sigma_u \left( x_1 - \frac12 \xi_2, x_2 + \frac12 \xi_1 \right) \sigma_u \left( x_1 + \frac12 \xi_2, x_2 - \frac12 \xi_1 \right). 
\end{equation}
%

\section{The Wigner distribution for symmetric Gaussian stationary $\gsp$s and white noise}\label{sec:wignerstationary}

In this section we draw some conclusions from Theorem \ref{thm:symbol} for the particular cases of 
stationary, frequency stationary, and white noise $\gsp$s. 

\begin{prop}\label{prop:WignerStat}
Suppose $u \in \cL ( \cS(\rr d ) , L_0^2(\Omega) )$ is a symmetric, Gaussian and stationary $\gsp$
with non-negative spectral Radon measure $\mu_u$. 
Define the Wigner distribution $W(u) \in \cL ( \cS(\rr {2d} ) , L^2(\Omega) )$ 
by \eqref{eq:Udef} and \eqref{eq:Wdefgsp}, 
and define the zero mean Wigner distribution $W_0(u) \in \cL ( \cS(\rr {2d} ) , L_0^2(\Omega) )$ 
by \eqref{eq:Udef}, \eqref{eq:U0} and \eqref{eq:W0}.  
Denote by $\sigma_W \in \cS' (\rr {4d})$ the Weyl symbol of the covariance operator of $W_0(u)$.
Then
\begin{align}
\bE W(u) & = (2 \pi)^{-\frac{d}{2}} 1 \otimes \mu_u, \quad \mbox{and} \quad  \label{eq:WignerStatExpect} \\
\sigma_W (x_1, x_2, \xi_1, \xi_2)
& = \mu_u \left(  x_2 + \frac12 \xi_1 \right) \mu_u \left(  x_2 - \frac12 \xi_1 \right), 
\quad x_1, x_2, \xi_1, \xi_2 \in \rr d. \label{eq:WignerStatWeyl}
\end{align}
\end{prop}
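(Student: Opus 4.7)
The plan is to combine the general formulas from Section \ref{sec:wignergeneral} with the specific structure of the Weyl symbol of the covariance operator in the stationary case, which was identified in \eqref{eq:WeylSymbolStat}.

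First I would establish \eqref{eq:WignerStatExpect}. By \eqref{eq:EW} the expected value of the Wigner distribution of any symmetric tempered $\gsp$ is $\bE W(u) = (2\pi)^{-d/2} \sigma_u$, where $\sigma_u$ is the Weyl symbol of the covariance operator. Since $u$ is stationary, \eqref{eq:WeylSymbolStat} gives $\sigma_u = 1 \otimes \mu_u$. Substituting yields the claim immediately.

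Next, for \eqref{eq:WignerStatWeyl}, I would apply Theorem \ref{thm:symbol}, or more conveniently its coordinate form \eqref{eq:WeylWignerCoord}, which states
\begin{equation*}
\sigma_W(x_1, x_2, \xi_1, \xi_2) = \sigma_u\!\left(x_1 - \tfrac{1}{2}\xi_2, x_2 + \tfrac{1}{2}\xi_1\right) \sigma_u\!\left(x_1 + \tfrac{1}{2}\xi_2, x_2 - \tfrac{1}{2}\xi_1\right).
\end{equation*}
Inserting $\sigma_u = 1 \otimes \mu_u$, which means that $\sigma_u(y_1, y_2)$ depends only on $y_2$ and equals $\mu_u(y_2)$, the dependence on $x_1 \pm \xi_2/2$ drops out, and we obtain the stated product $\mu_u(x_2 + \xi_1/2)\,\mu_u(x_2 - \xi_1/2)$.

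There is essentially no obstacle here, since both statements are direct consequences of results already proved. The only point requiring minor care is to interpret the substitution $\sigma_u = 1 \otimes \mu_u$ correctly at the distributional level: $\mu_u$ is a tempered Radon measure on $\rr d$, so the right-hand side of \eqref{eq:WignerStatWeyl} is to be read as a tensor product of translated copies of $\mu_u$ on $\rr {4d}$, pulled back by the affine map $(x_1, x_2, \xi_1, \xi_2) \mapsto (x_2 + \xi_1/2, x_2 - \xi_1/2)$. Since this map is a submersion (indeed linear of rank $2d$), the pullback is well defined as a distribution on $\rr {4d}$, and the formal substitution above is rigorously justified by testing both sides against tensor products $\Phi \in \cS(\rr {4d})$ via Theorem \ref{thm:symbol}.
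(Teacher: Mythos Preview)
Your proof is correct and follows exactly the same approach as the paper: the paper's proof simply cites \eqref{eq:EW} and \eqref{eq:WeylSymbolStat} for \eqref{eq:WignerStatExpect}, and \eqref{eq:WeylSymbolStat} together with Theorem~\ref{thm:symbol} (via \eqref{eq:WeylWignerCoord}) for \eqref{eq:WignerStatWeyl}. Your additional remark on the distributional interpretation of the substitution is a helpful elaboration but does not change the argument.
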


\begin{proof}
Formula \eqref{eq:WignerStatExpect} is a direct consequence of \eqref{eq:EW} and \eqref{eq:WeylSymbolStat}, 
and \eqref{eq:WignerStatWeyl} follows from \eqref{eq:WeylSymbolStat} and Theorem \ref{thm:symbol} (cf. \eqref{eq:WeylWignerCoord}). 
\end{proof}

\begin{cor}\label{cor:StatTranslationSymbol}
Under the assumptions of Proposition \ref{prop:WignerStat} we have 
\begin{equation*}
T_{x, 0, 0, \xi} \sigma_W  = \sigma_W \quad \forall x, \xi \in \rr d, 
\end{equation*}
that is $\sigma_W$ is invariant under translation in the first and fourth $\rr d$ coordinate. 
\end{cor}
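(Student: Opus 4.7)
The plan is to trace the tensor-product structure of $\sigma_u$ through the composition formula of Theorem \ref{thm:symbol}. By \eqref{eq:WeylSymbolStat}, $\sigma_u = 1 \otimes \mu_u$, so $\sigma_u \otimes \sigma_u = 1 \otimes \mu_u \otimes 1 \otimes \mu_u \in \cS'(\rr {4d})$ is constant in its first and third $d$-dimensional blocks. In particular, it is invariant under any translation of the form $T_{(a, 0, b, 0)}$ with $a, b \in \rr d$, since $T_a 1 = 1$.

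By Theorem \ref{thm:symbol} we have $\sigma_W = (\sigma_u \otimes \sigma_u) \circ A$ with $A = \kappa_2 \circ (I \otimes (-\J))$ invertible. A short matrix calculation using \eqref{eq:kappa2} and the definition of $\J$ gives
\begin{equation*}
A(x, 0, 0, \xi) = \bigl(x - \tfrac12 \xi, \ 0, \ x + \tfrac12 \xi, \ 0\bigr), \quad x, \xi \in \rr d,
\end{equation*}
so the image of the distinguished two-plane $\{(x, 0, 0, \xi) : x, \xi \in \rr d\} \subseteq \rr {4d}$ under $A$ lies in the plane $\{(a, 0, b, 0) : a, b \in \rr d\}$ along which $\sigma_u \otimes \sigma_u$ is translation invariant. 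Using the general identity $T_a(f \circ A) = (T_{Aa} f) \circ A$ valid for any invertible linear $A$ and any $f \in \cS'(\rr {4d})$, one then obtains
\begin{equation*}
T_{(x, 0, 0, \xi)} \sigma_W = \bigl(T_{A(x, 0, 0, \xi)} (\sigma_u \otimes \sigma_u)\bigr) \circ A = (\sigma_u \otimes \sigma_u) \circ A = \sigma_W
\end{equation*}
for every $x, \xi \in \rr d$, which is the claim.

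Heuristically this is nothing more than observing from \eqref{eq:WignerStatWeyl} that $\sigma_W(x_1, x_2, \xi_1, \xi_2)$ depends on neither $x_1$ nor $\xi_2$, so translation in those coordinate blocks is trivial. I do not anticipate any genuine obstacle; the only mildly delicate point is that the formal product $\mu_u(x_2 + \tfrac12 \xi_1)\mu_u(x_2 - \tfrac12 \xi_1)$ appearing in \eqref{eq:WignerStatWeyl} requires interpretation, but it arises naturally as the pullback of the well-defined tensor product $\mu_u \otimes \mu_u \in \cS'(\rr {2d})$ under an invertible linear map, which is exactly the content of the argument above.
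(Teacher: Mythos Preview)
Your proof is correct and matches the paper's approach: the paper states this corollary without proof, treating it as immediate from \eqref{eq:WignerStatWeyl}, and your argument is precisely a rigorous unpacking of that observation (with the added care of interpreting the product of measures as a pullback of a tensor product, which the paper leaves implicit).
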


\begin{cor}\label{cor:StatTranslationInvariance}
Under the assumptions of Proposition \ref{prop:WignerStat} we have 
\begin{equation*}
\cK_{W_0(u)} T_{(x,0)} M_{(0, \xi)} 
= T_{(x,0)} M_{(0, \xi)} \cK_{W_0(u)}
 \quad \forall x, \xi \in \rr d.  
\end{equation*}
\end{cor}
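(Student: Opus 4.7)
The plan is to reduce Corollary \ref{cor:StatTranslationInvariance} to the symbol-level invariance already established in Corollary \ref{cor:StatTranslationSymbol} by invoking the standard covariance of the Weyl calculus under translations and modulations. Since $\cK_{W_0(u)}$ is by construction the Weyl quantization of $\sigma_W \in \cS'(\rr{4d})$ (acting on $\cS(\rr{2d})$), it suffices to translate the statement ``$\sigma_W$ is invariant under translation in the first and fourth $\rr d$-coordinate'' into the operator-theoretic statement that $\cK_{W_0(u)}$ commutes with $T_{(x,0)}$ and with $M_{(0,\xi)}$.

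Concretely, I would first recall the well-known identities for the Weyl quantization on $\rr{2d}$: for a symbol $\sigma(y,\eta)$ with $y,\eta \in \rr{2d}$, and for $z,\zeta \in \rr{2d}$,
\begin{equation*}
T_z \sigma^w(x,D) T_{-z} = \left(\sigma(\cdot - z, \cdot)\right)^w(x,D), \qquad
M_\zeta \sigma^w(x,D) M_{-\zeta} = \left(\sigma(\cdot, \cdot - \zeta)\right)^w(x,D),
\end{equation*}
which follow by a direct change of variables in the defining integral \eqref{eq:weylquantization} (and extend to $\sigma \in \cS'(\rr{4d})$ by duality via \eqref{eq:wignerweyl}). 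Specializing to $z = (x,0)$ and $\zeta = (0,\xi)$ with $x,\xi \in \rr d$, the first identity shifts $\sigma_W$ in its first $\rr d$-coordinate by $x$, and the second shifts $\sigma_W$ in its fourth $\rr d$-coordinate by $\xi$.

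By Corollary \ref{cor:StatTranslationSymbol} both of these shifts leave $\sigma_W$ unchanged, so
\begin{equation*}
T_{(x,0)} \cK_{W_0(u)} T_{-(x,0)} = \cK_{W_0(u)}, \qquad M_{(0,\xi)} \cK_{W_0(u)} M_{-(0,\xi)} = \cK_{W_0(u)},
\end{equation*}
which are equivalent to the two commutation relations $\cK_{W_0(u)} T_{(x,0)} = T_{(x,0)} \cK_{W_0(u)}$ and $\cK_{W_0(u)} M_{(0,\xi)} = M_{(0,\xi)} \cK_{W_0(u)}$. Composing these then yields $\cK_{W_0(u)} T_{(x,0)} M_{(0,\xi)} = T_{(x,0)} M_{(0,\xi)} \cK_{W_0(u)}$ for all $x,\xi \in \rr d$, proving the claim.

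There is no serious obstacle here: the content is entirely in Corollary \ref{cor:StatTranslationSymbol}. The only minor point requiring care is to make sure the Weyl covariance identities are stated in sufficient generality to apply to the distributional symbol $\sigma_W \in \cS'(\rr{4d})$, which is standard (one transfers the identities from $\cS$-symbols to $\cS'$-symbols using \eqref{eq:wignerweyl} and the fact that $T_z, M_\zeta$ preserve $\cS$ and $\cS'$).
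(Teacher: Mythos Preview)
Your proposal is correct and takes essentially the same approach as the paper: both reduce to Corollary~\ref{cor:StatTranslationSymbol} via the covariance of the Weyl calculus under time-frequency shifts. The only cosmetic difference is that the paper phrases this covariance on the Wigner side, using \cite[Proposition~4.3.2]{Grochenig1} to write $T_{x,0,0,\xi} W(\Phi,\Psi) = W(T_{(x,0)} M_{(0,\xi)} \Phi, T_{(x,0)} M_{(0,\xi)} \Psi)$ and then pairing with $\sigma_W$ via \eqref{eq:wignerweyl}, whereas you phrase it on the operator side as conjugation identities for $\sigma^w(x,D)$; these are dual formulations of the same fact.
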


\begin{proof}
From Corollary \ref{cor:StatTranslationSymbol} and 
\cite[Proposition~4.3.2]{Grochenig1}
we obtain for $\Phi, \Psi \in \cS(\rr {2d})$ and any $x, \xi \in \rr d$
\begin{align*}
\left( \cK_{W_0(u)} \Psi, \Phi \right)
& = (2 \pi)^{-d} \left( \sigma_W, W(\Phi, \Psi) \right) \\
& = (2 \pi)^{-d} \left( \sigma_W, T_{x, 0, 0, \xi} W(\Phi, \Psi) \right) \\
& = (2 \pi)^{-d} \left( \sigma_W, W( T_{(x,0)} M_{(0, \xi)} \Phi, T_{(x,0)} M_{(0, \xi)} \Psi) \right) \\
& = \left( \cK_{W_0(u)} T_{(x,0)} M_{(0, \xi)}  \Psi, T_{(x,0)} M_{(0, \xi)}  \Phi \right) \\
& = \left( M_{(0, - \xi)}   T_{(-x,0)} \cK_{W_0(u)} T_{(x,0)} M_{(0, \xi)}  \Psi, \Phi \right)
\end{align*}
which implies $\cK_{W_0(u)} T_{(x,0)} M_{(0, \xi)} = T_{(x,0)} M_{(0, \xi)}  \cK_{W_0(u)}$
for all $x, \xi \in \rr d$. 
\end{proof}

\begin{rem}\label{rem:StatWigner}
Comparing Corollary \ref{cor:StatTranslationInvariance} with \eqref{eq:TranslationInvariance} and 
\eqref{eq:ModulationInvariance} we see that 
the zero mean Wigner distribution $W_0(u)$ for a stationary Gaussian symmetric $\gsp$
has stationary behavior in the first (``time'') $\rr d$ coordinate in $T^* \rr d$, and 
frequency stationary behavior in the second (``frequency'') $\rr d$ coordinate in $T^* \rr d$. 
These consequences of stationarity are quite natural, considering the zero mean Wigner distribution $W_0(u)$
as a time-frequency representation of $u$.
\end{rem}

\begin{cor}\label{cor:WignerWGN}
Suppose $u \in \cL ( \cS(\rr d ) , L_0^2(\Omega) )$ is a symmetric, Gaussian $\gsp$ which is white noise with power $p > 0$. 
Then the Wigner distribution $W(u) \in \cL ( \cS(\rr {2d} ) , L^2(\Omega) )$ 
has mean 
\begin{equation}\label{eq:WSwhitenoise}
\bE W(u) (x,\xi) = (2 \pi)^{- \frac{d}{2} } p, \quad (x,\xi) \in T^* \rr d, 
\end{equation}
and the zero mean Wigner distribution $W_0(u) \in \cL ( \cS(\rr {2d} ) , L_0^2(\Omega) )$ 
is white noise with power $p^2$. 
\end{cor}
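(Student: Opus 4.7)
The plan is to obtain both conclusions as direct corollaries of Theorem \ref{thm:symbol} and Equation \eqref{eq:EW}, after noting the characterization of white noise in terms of its Weyl symbol that appears just after Definition \ref{def:WhiteNoise}.

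First I would recall that by that characterization, $u$ being white noise with power $p>0$ is equivalent to the Weyl symbol of its covariance operator being the constant function $\sigma_u = p$ on $\rr{2d}$. Combining this with formula \eqref{eq:EW} immediately gives
\begin{equation*}
\bE W(u)(x,\xi) = (2\pi)^{-d/2} \sigma_u(x,\xi) = (2\pi)^{-d/2} p, \quad (x,\xi) \in T^* \rr d,
\end{equation*}
which proves \eqref{eq:WSwhitenoise}.

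Next, since $u$ is in particular symmetric and Gaussian, Theorem \ref{thm:symbol} in the coordinate form \eqref{eq:WeylWignerCoord} applies. Substituting $\sigma_u \equiv p$ into \eqref{eq:WeylWignerCoord} yields
\begin{equation*}
\sigma_W(x_1,x_2,\xi_1,\xi_2) = p \cdot p = p^2
\end{equation*}
for all $x_1,x_2,\xi_1,\xi_2 \in \rr d$, so $\sigma_W \equiv p^2$ is constant on $\rr{4d}$.

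Finally I would invoke again the equivalent characterization of white noise stated after Definition \ref{def:WhiteNoise}: a tempered $\gsp$ whose covariance operator has Weyl symbol equal to a positive constant is white noise with power equal to that constant. Applied to $W_0(u) \in \cL(\cS(\rr{2d}), L_0^2(\Omega))$, whose Weyl symbol $\sigma_W$ is constantly $p^2$, this gives that $W_0(u)$ is a white noise $\gsp$ on the phase space $T^*\rr d = \rr{2d}$ with power $p^2$, completing the proof. There is no real obstacle here; once Theorem \ref{thm:symbol} is in hand the corollary is an immediate specialization, the only thing to keep in mind being that the white noise characterization via the Weyl symbol is valid on any Euclidean space, in particular on $\rr{2d}$.
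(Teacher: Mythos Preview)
Your proof is correct and essentially the same as the paper's: the paper routes the argument through Proposition~\ref{prop:WignerStat} (which itself is just \eqref{eq:EW} and Theorem~\ref{thm:symbol} specialized to the stationary case), while you invoke \eqref{eq:EW} and \eqref{eq:WeylWignerCoord} directly, but the substance is identical.
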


\begin{proof}
According to the discussion after Definition \ref{def:WhiteNoise} we have $\mu_u = p$ and $\sigma_u = p$.
The claim \eqref{eq:WSwhitenoise} thus follows from 
\eqref{eq:WignerStatExpect} in Proposition \ref{prop:WignerStat}. 

From \eqref{eq:WignerStatWeyl} in Proposition \ref{prop:WignerStat} it likewise follows that $\sigma_W = p^2$ on $T^* \rr {2d}$. 
Hence the covariance operator for $W_0(u)$ is $\cK_{W_0(u)} = p^2 I$.
By Definition \ref{def:WhiteNoise} it follows that $W_0(u)$ is white noise on $T^* \rr d$ with power $p^2$. 
\end{proof}

Finally we state versions of Proposition \ref{prop:WignerStat}, 
and Corollaries \ref{cor:StatTranslationSymbol} and \ref{cor:StatTranslationInvariance}
for frequency stationary $\gsp$s 
according to Definition \ref{def:FrequencyStat}.

\begin{prop}\label{prop:WignerFrequencyStat}
Suppose $u \in \cL ( \cS(\rr d ) , L_0^2(\Omega) )$ is a symmetric, Gaussian and frequency stationary 
$\gsp$ such that $\wh u$ has non-negative spectral Radon measure $\mu_{\wh u}$. 
Define the Wigner distribution $W(u) \in \cL ( \cS(\rr {2d} ) , L^2(\Omega) )$ 
by \eqref{eq:Udef} and \eqref{eq:Wdefgsp}, 
and define the zero mean Wigner distribution $W_0(u) \in \cL ( \cS(\rr {2d} ) , L_0^2(\Omega) )$ 
by \eqref{eq:Udef}, \eqref{eq:U0} and \eqref{eq:W0}.  
Denote by $\sigma_W \in \cS' (\rr {4d})$ the Weyl symbol of the covariance operator of $W_0(u)$.
Then
\begin{align}
\bE W(u) & = (2 \pi)^{-\frac{d}{2}} \check{\mu}_{\wh u} \otimes 1,  \label{eq:WignerFStatExpect} \\
\sigma_W (x_1, x_2, \xi_1, \xi_2)
& = \mu_{\wh u} \left(  -x_1 + \frac12 \xi_2 \right) \mu_{\wh u} \left(  -x_1 - \frac12 \xi_2 \right), 
\quad x_1, x_2, \xi_1, \xi_2 \in \rr d, \label{eq:WignerFStatWeyl}
\end{align}
\begin{equation}\label{eq:TranslationInvarianceFstat}
T_{0,x,\xi,0} \sigma_W  = \sigma_W \quad \forall x, \xi \in \rr d, 
\end{equation}
and 
\begin{equation}\label{eq:TranslationCommFstat}
\cK_{W_0(u)} T_{(0,x)} M_{(\xi, 0)} 
= T_{(0, x)} M_{(\xi, 0)} \cK_{W_0(u)}
 \quad \forall x, \xi \in \rr d.  
\end{equation}
\end{prop}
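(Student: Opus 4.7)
My plan is to deduce all four claims by feeding the frequency-stationary structure into results we already have, so no new heavy machinery is needed. Concretely, \eqref{eq:WeylSymbolFrequencyStat} gives us the Weyl symbol of $\cK_u$ explicitly as $\sigma_u = \check\mu_{\wh u} \otimes 1$, and the proposition then follows by feeding this symbol into \eqref{eq:EW} and into the coordinate form \eqref{eq:WeylWignerCoord} of Theorem \ref{thm:symbol}.

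First I would prove \eqref{eq:WignerFStatExpect} in one line by combining \eqref{eq:EW} with \eqref{eq:WeylSymbolFrequencyStat}: $\bE W(u) = (2\pi)^{-d/2}\sigma_u = (2\pi)^{-d/2} \check\mu_{\wh u}\otimes 1$. Then \eqref{eq:WignerFStatWeyl} follows by substituting $\sigma_u(a,b)=\check\mu_{\wh u}(a)=\mu_{\wh u}(-a)$ into \eqref{eq:WeylWignerCoord}, which immediately yields the product $\mu_{\wh u}(-x_1+\tfrac12\xi_2)\mu_{\wh u}(-x_1-\tfrac12\xi_2)$. The translation invariance \eqref{eq:TranslationInvarianceFstat} is then just the observation that the right-hand side of \eqref{eq:WignerFStatWeyl} depends only on $x_1$ and $\xi_2$, hence is trivially unaffected by translation in the $x_2$ and $\xi_1$ slots.

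The last assertion \eqref{eq:TranslationCommFstat} is the analogue of Corollary \ref{cor:StatTranslationInvariance} and I would prove it by exactly the same template, with the roles of the first and second $\rr d$ blocks swapped. The key covariance property to invoke is that, by \cite[Proposition~4.3.2]{Grochenig1} applied in $\rr{2d}$,
\begin{equation*}
W\bigl(T_{(0,x)}M_{(\xi,0)}\Phi,\,T_{(0,x)}M_{(\xi,0)}\Psi\bigr)
= T_{0,x,\xi,0}\,W(\Phi,\Psi)
\end{equation*}
for $\Phi,\Psi\in\cS(\rr{2d})$. Coupling this with \eqref{eq:TranslationInvarianceFstat} through $(\cK_{W_0(u)}\Psi,\Phi)=(2\pi)^{-d}(\sigma_W,W(\Phi,\Psi))$ and taking adjoints of $T_{(0,x)}$ and $M_{(\xi,0)}$, one reaches $M_{(-\xi,0)}T_{(0,-x)}\cK_{W_0(u)} T_{(0,x)}M_{(\xi,0)}=\cK_{W_0(u)}$, which is equivalent to \eqref{eq:TranslationCommFstat}.

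I do not anticipate a substantive obstacle: the content has been absorbed into Theorem \ref{thm:symbol} and the identities \eqref{eq:EW}, \eqref{eq:WeylSymbolFrequencyStat}, and the previous corollary. The only point requiring care is bookkeeping of which pair of $\rr d$ coordinates in $\rr{4d}\cong T^*\rr{2d}$ is the ``spatial'' and which is the ``frequency'' one, since under frequency stationarity the translation-modulation pattern is swapped relative to the stationary case of Corollary \ref{cor:StatTranslationInvariance}.
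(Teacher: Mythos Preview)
Your proposal is correct and follows essentially the same approach as the paper. The only cosmetic difference is that for \eqref{eq:WignerFStatExpect} the paper routes through $\wh u$ (applying Proposition~\ref{prop:WignerStat} to $\wh u$ and then Lemma~\ref{lem:WignerFourier}), whereas you invoke \eqref{eq:EW} together with \eqref{eq:WeylSymbolFrequencyStat} directly; since \eqref{eq:WeylSymbolFrequencyStat} was itself derived by that same Fourier route, the two arguments coincide in substance, and the remaining parts match the paper verbatim.
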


\begin{proof}
The assumptions and \eqref{eq:WignerStatExpect} of Proposition \ref{prop:WignerStat} imply
$\bE W(\wh u) = (2 \pi)^{-\frac{d}{2}} 1 \otimes \mu_{\wh u}$.
Lemma \ref{lem:WignerFourier} thus gives 
\begin{align*}
\bE W(u) 
& = \bE W(\wh u) \circ \J 
= (2 \pi)^{-\frac{d}{2}} \check{\mu}_{\wh u} \otimes 1
\end{align*}
which is \eqref{eq:WignerFStatExpect}.

The formula \eqref{eq:WignerFStatWeyl} is a consequence of Theorem \ref{thm:symbol}, 
\eqref{eq:WeylSymbolFourierGSP} and $\sigma_{\wh u} = 1 \otimes \mu_{\wh u}$, cf. \eqref{eq:WeylSymbolStat}. 
The translation invariance \eqref{eq:TranslationInvarianceFstat} follows from \eqref{eq:WignerFStatWeyl}, 
and finally \eqref{eq:TranslationCommFstat} follows from \eqref{eq:TranslationInvarianceFstat} 
and \cite[Proposition~4.3.2]{Grochenig1} as
\begin{align*}
\left( \cK_{W_0(u)} \Psi, \Phi \right)
& = (2 \pi)^{-d} \left( \sigma_W, W(\Phi, \Psi) \right) \\
& = (2 \pi)^{-d} \left( \sigma_W, T_{0,x, \xi,0} W(\Phi, \Psi) \right) \\
& = (2 \pi)^{-d} \left( \sigma_W, W( T_{(0,x)} M_{(\xi,0)} \Phi, T_{(0,x)} M_{(\xi,0)} \Psi) \right) \\
& = \left( \cK_{W_0(u)} T_{(0,x)} M_{(\xi, 0)}  \Psi, T_{(0, x)} M_{(\xi, 0)}  \Phi \right) \\
& = \left( M_{(- \xi,0)}   T_{(0, -x)} \cK_{W_0(u)} T_{(0, x)} M_{(\xi, 0)}  \Psi, \Phi \right)
\end{align*}
for all $\Phi, \Psi \in \cS(\rr {2d})$ and all $x, \xi \in \rr d$.
\end{proof}

\begin{rem}\label{rem:FrequencyStatWigner}
Again comparing \eqref{eq:TranslationCommFstat} with \eqref{eq:TranslationInvariance} and 
\eqref{eq:ModulationInvariance} we see that 
the zero mean Wigner distribution $W_0(u)$ for a frequency stationary Gaussian symmetric $\gsp$
has stationary behavior in the second (``frequency'') $\rr d$ coordinate in $T^* \rr d$, and 
frequency stationary behavior in the first (``time'') $\rr d$ coordinate in $T^* \rr d$. 
\end{rem}

\section{The Wigner distribution of symmetric Brownian motion}\label{sec:WignerBrown}

In this section we apply Theorem \ref{thm:symbol} to Brownian motion. 
Consider a Wiener process (Brownian motion) $b: \ro_+ \to L_0^2(\Omega)$ which is real-valued Gaussian with covariance function
\begin{equation}\label{eq:CovBrown}
\bE \left( b(x) b(y) \right)
= \frac12 \min(x,y), \quad x, y \in \ro_+.
\end{equation}
A symmetric Gaussian complex-valued Wiener process can be constructed as $b = b_1 + i b_2$
where $b_1$ and $b_2$ are independent real-valued Gaussian with covariance function \eqref{eq:CovBrown}.
It follows that 
\begin{equation*}
\bE \left( b(x) \overline{b(y)} \right) = \min(x,y), \quad x, y \in \ro_+.
\end{equation*}

We may extend $b$ to domain $\ro$ by defining $b(x) = 0$ for $x < 0$, 
and then 
\begin{equation}\label{eq:covbrown}
k_b(x,y) = \bE \left( b(x) \overline{b(y)} \right) = 
\left\{
\begin{array}{ll}
\min(x,y), & x, y \in \ro_+, \\
0, & x < 0 \mbox{ or } y < 0.
\end{array}
\right.
\end{equation}
From $k_b \in \cS'(\rr 2)$ 
it follows that 
\begin{equation*}
(b, \fy) = \int_\ro b(x) \overline{\fy(x)} \dd x, \quad \fy \in \cS(\ro),
\end{equation*}
defines $b \in \cL( \cS(\ro), L_0^2(\Omega))$ that is a tempered non-stationary $\gsp$. 

Denoting the Heaviside step function by $h$ we have if $\fy, \psi \in \cS(\ro)$ 
\begin{align*}
\iint_{\rr 2} k_b(x,y) \overline{\fy'(x)} \psi'(y) \dd x \dd y 
& = \iint_{0 \leqs x \leqs y} x \overline{\fy'(x)} \psi'(y) \dd x \dd y +
\iint_{0 \leqs y \leqs x} y \overline{\fy'(x)} \psi'(y) \dd x \dd y \\
& = \int_{y = 0}^{+\infty} \psi'(y) \left( \int_{x = 0}^y x \overline{\fy'(x)} \dd x  \right) \dd y
+ \int_{y = 0}^{+\infty} \psi'(y) y \left( \int_{x = y}^{+\infty} \overline{\fy'(x)} \dd x  \right) \dd y \\
& = \int_{y = 0}^{+\infty} \psi'(y) \left( \left[ x \overline{\fy(x)} \right]_0^y - \int_{x = 0}^y \overline{\fy(x)} \dd x  \right) \dd y
- \int_{y = 0}^{+\infty} \psi'(y) y \overline{\fy(y)} \dd y \\
& = - \int_{y = 0}^{+\infty} \psi'(y) \left( \int_{x = 0}^y \overline{\fy(x)} \dd x  \right) \dd y \\
& = - \int_{x = 0}^{+\infty} \overline{\fy(x)} \left( \int_{y = x}^{+\infty} \psi'(y) \dd y  \right) \dd x \\
& = \int_{-\infty}^{+\infty} h (x) \psi(x) \overline{h (x) \fy(x)} \dd x \\
& = \left( 1 \otimes \delta_0, \left( h \fy \otimes \overline{h \psi} \right) \circ \kappa \right) \\
& = \left( \left( h \otimes h \right) \left( \left( 1 \otimes \delta_0 \right) \circ \kappa^{-1} \right), \fy \otimes \overline{\psi} \right)
\end{align*}
with $\kappa \in \rr {2 \times 2}$ defined by \eqref{eq:kappa}. 
It follows that $\partial_x \partial_y k_b = \left( h \otimes h \right) \left( \left( 1 \otimes \delta_0 \right) \circ \kappa^{-1} \right)$ in $\cS'( \rr 2)$. 
If $\fy, \psi \in \cS(\ro)$ then
\begin{align*}
\left( k_{b'}, \fy \otimes \overline{\psi} \right)
& = \bE \left( (b',\fy) \overline{(b', \psi)}\right)
= \bE \left( (b,\fy') \overline{(b, \psi')}\right)
= \left( k_b, \fy' \otimes \overline{\psi'} \right)
= \left( \partial_x \partial_y k_b, \fy \otimes \overline{\psi} \right)
\end{align*}
and hence $k_{b'} = \left( h \otimes h \right) \left( \left( 1 \otimes \delta_0 \right) \circ \kappa^{-1} \right)$ in $\cS'( \rr 2)$.
We may conclude that $(\cK_{b'} \psi, \fy) = (h \psi, h \fy)_{L^2}$ for $\fy, \psi \in \cS(\ro)$, 
that is, $\cK_{b'} = h \cdot I$ is the identity operator times $h$ on $\cS(\ro)$. 
Comparing with Definition \ref{def:WhiteNoise} we may conclude that $b' \in \cL( \cS(\ro), L_0^2(\Omega))$
is white noise with unit power multiplied with $h$.

From \eqref{eq:covbrown} we infer $k_b \circ \kappa (x,y) = x - \frac12 |y|$ provided $x - \frac12 |y| \geqs 0$ and 
$k_b \circ \kappa (x,y) = 0$ otherwise. 
Thus from \eqref{eq:KernelWeylsymbol} it follows that the Weyl symbol of the covariance operator $\sigma_{b} (x,\xi) = 0$ if $x \leqs 0$, 
and more generally we have
\begin{equation*}
\sigma_{b} (x,\xi) 
= h(x) \int_{|y| \leqs 2 x} \left( x - \frac12 |y| \right) e^{- i y \xi} \dd y  
= 2 x^2 \left( \frac{\sin(x \xi)}{x \xi} \right)^2 h(x).
\end{equation*}
This formula is not new since it appears in \cite[Eq. (2.164)]{Flandrin2}.  

However, the covariance function of the Wigner distribution of Brownian motion does apparently not appear in the literature. 
We obtain a formula for the Weyl symbol of the covariance operator of the zero mean Wigner distribution $W_0(b)$ from \eqref{eq:WeylWignerCoord} as
\begin{align*}
& \sigma_W (x_1, x_2, \xi_1, \xi_2) 
= \sigma_b \left( x_1 - \frac12 \xi_2, x_2 + \frac12 \xi_1 \right) \sigma_b \left( x_1 + \frac12 \xi_2, x_2 - \frac12 \xi_1 \right) \\
& =  4 h( 2 x_1 - |\xi_2|) 
\left( x_1^2 - \frac14 \xi_2^2 \right)^2
\left( \frac{\sin\left( \left( x_1 - \frac12 \xi_2 \right) \left( x_2 + \frac12 \xi_1 \right) \right) }
{\left( x_1 - \frac12 \xi_2 \right) \left( x_2 + \frac12 \xi_1 \right) } 
\cdot \frac{ \sin\left( \left( x_1 + \frac12 \xi_2 \right) \left( x_2 - \frac12 \xi_1 \right) \right) }
{ \left( x_1 + \frac12 \xi_2 \right) \left( x_2 - \frac12 \xi_1 \right) } 
\right)^2 \\
& =  h( 2 x_1 - |\xi_2|) 
\left( 
\frac{ \cos\left( x_1 \xi_1 - x_2 \xi_2 \right)
- \cos\left( 2 x_1 x_2 - \frac12 \xi_1 \xi_2 \right) }
{x_2^2 - \frac14 \xi_1^2}
\right)^2
\end{align*}
where $x_1, x_2, \xi_1, \xi_2 \in \ro$.
Hence $\supp \sigma_W \subseteq \ro_+ \times \rr 3$.
We also observe that $\sigma_W (x_1, x_2, \xi_1, \xi_2) = 0$ when
$|x_1| = \frac12 |\xi_2|$.

\section{Application to non-negative operators in the Weyl calculus}\label{sec:NonnegWeylCalc}

In this section we deduce a consequence of Theorem \ref{thm:symbol} formulated for non-negative operators in the Weyl
calculus of pseudodifferential operators. 
Recall that if $a \in \cS'(\rr {2d})$ then $a^w(x,D): \cS(\rr d) \to \cS'(\rr d)$ is non-negative on $\cS(\rr d)$ 
if $( a^w(x,D) f, f ) \geqs 0$ for all $f \in \cS(\rr d)$.
This is abbreviated as $a^w(x,D) \geqs 0$ on $\cS(\rr d)$. 

\begin{prop}\label{prop:posWeyl}
Suppose $a \in \cS'(\rr {2d})$ and $a^w(x,D) \geqs 0$ on $\cS(\rr d)$.
If 
\begin{align*}
b (x_1, x_2, \xi_1, \xi_2) 
& = 
a \left( x_1 - \frac12 \xi_2, x_2 + \frac12 \xi_1 \right) a \left( x_1 + \frac12 \xi_2, x_2 - \frac12 \xi_1 \right) \in \cS'(\rr {4d}), \\
& \qquad \qquad x_1, x_2, \xi_1, \xi_2 \in \rr d, 
\end{align*}
then $b^w(x,D) \geqs 0$ on $\cS(\rr {2d} )$. 
\end{prop}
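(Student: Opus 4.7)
The strategy is to reverse-engineer Theorem \ref{thm:symbol}: if $a$ can be realised as the covariance Weyl symbol $\sigma_u$ of some symmetric Gaussian tempered $\gsp$ $u$, then formula \eqref{eq:WeylWignerCoord} forces $b$ to be the Weyl symbol $\sigma_W$ of the covariance operator $\cK_{W_0(u)}$ of its zero mean Wigner distribution, and covariance operators are automatically non-negative.

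I would start by noting that $a^w(x,D) \geqs 0$ on $\cS(\rr d)$ forces $a$ to be real-valued (by the argument behind \eqref{eq:RealWeylSymbol}) and $a^w$ to be formally self-adjoint, so that $(\fy, \psi) \mapsto (a^w \psi, \fy)$ is a continuous non-negative sesquilinear form on $\cS(\rr d) \times \cS(\rr d)$. The main step is then to construct a complex symmetric Gaussian tempered $\gsp$ $u \in \cL(\cS(\rr d), L_0^2(\Omega))$ whose covariance operator equals $a^w(x,D)$. To do this I would apply the Bochner--Minlos theorem on the nuclear space $\cS(\rr d)$ to the continuous positive definite functional $\fy \mapsto \exp(-\tfrac14 (a^w \fy, \fy))$ on real-valued Schwartz functions, which yields a real Gaussian tempered $\gsp$ with covariance operator $\tfrac12 a^w$. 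Taking two independent copies $v_1, v_2$ of this process and setting $u = v_1 + i v_2$ produces a complex Gaussian tempered $\gsp$ for which one checks directly $\bE(u,\fy)\overline{(u,\psi)} = (a^w \psi, \fy)$ and $\bE(u,\fy)(u,\psi) = 0$, so that $u$ is symmetric in the sense of \eqref{eq:covsymmetric} and satisfies $\sigma_u = a$.

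Once $u$ is in place, Theorem \ref{thm:symbol} in the pointwise form \eqref{eq:WeylWignerCoord} gives $\sigma_W = b$, and $\cK_{W_0(u)} \geqs 0$ on $\cS(\rr {2d})$ by the discussion in \S\ref{subsec:temperedGSP}. Since $b^w(x,D) = \cK_{W_0(u)}$, this concludes the argument. The main obstacle is the Gaussian realisation step: invoking Bochner--Minlos on the nuclear Schwartz space and combining it with the complex-symmetric real--imaginary decomposition to enforce \eqref{eq:covsymmetric}. Everything else is a clean application of the main theorem together with the tautological non-negativity of covariance Weyl symbols.
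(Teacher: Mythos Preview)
Your overall strategy is exactly the paper's: realise $a$ as $\sigma_u$ for some symmetric Gaussian tempered $\gsp$ $u$, then apply Theorem~\ref{thm:symbol} so that $b=\sigma_W$ is the Weyl symbol of the covariance operator of $W_0(u)$, hence $b^w\geqs 0$. The difficulty, and the place where your argument breaks, is the construction of $u$.

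Applying Bochner--Minlos to $\fy\mapsto\exp\!\big(-\tfrac14(a^w\fy,\fy)\big)$ on real-valued $\fy\in\cS(\rr d,\ro)$ yields a real Gaussian $\gsp$ $v$ with $\bE((v,\fy)(v,\psi))=\tfrac12\,\re(a^w\psi,\fy)=\tfrac12(\re k,\fy\otimes\psi)$, where $k$ is the Schwartz kernel of $a^w$; the quadratic form only sees the symmetric (real) part of $k$. If you then take two \emph{independent} copies $v_1,v_2$ and set $u=v_1+iv_2$, a direct computation gives $k_u=\re k$, not $k$. Thus $\sigma_u$ is the Weyl symbol of the operator with kernel $\re k$, which is $a$ only when $\im k=0$. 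In general $k$ is merely Hermitian, $k(x,y)=\overline{k(y,x)}$, and $\im k$ need not vanish (e.g.\ $a^w=(\,\cdot\,,f)f$ with $f$ not real up to a global phase). So your $u$ does not have covariance symbol $a$, and the conclusion $b=\sigma_W$ fails.

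The paper repairs exactly this point: it does not take $v_1,v_2$ independent. Writing the $2n\times 2n$ real matrix
\[
A=\tfrac12\begin{pmatrix}A_1&-A_2\\A_2&A_1\end{pmatrix},\qquad
A_1=\big((\re k,\fy_j\otimes\fy_k)\big)_{j,k},\quad
A_2=\big((\im k,\fy_j\otimes\fy_k)\big)_{j,k},
\]
it checks from $a^w\geqs 0$ that $A$ is symmetric non-negative definite, and then invokes Gelfand--Vilenkin to obtain jointly Gaussian real $\gsp$s $v_1,v_2$ with $\bE((v_j,\fy)(v_j,\psi))=\tfrac12(\re k,\fy\otimes\psi)$ and cross-covariance $\bE((v_1,\fy)(v_2,\psi))=-\tfrac12(\im k,\fy\otimes\psi)$. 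With this correlated pair one gets $k_u=k$ and $\bE((u,\fy)(u,\psi))=0$, so $u$ is symmetric with $\sigma_u=a$, and the rest of your argument goes through.
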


\begin{proof}
With $a^w = a^w(x,D)$ we have for all $f,g \in \cS(\rr d)$
\begin{equation*}
4 ( a^w f,g) = ( a^w(f+g), f+g ) - ( a^w(f-g), f-g ) + i \Big( ( a^w(f+ig), f+ig ) - (  a^w (f-ig), f-ig ) \Big). 
\end{equation*}
By the assumption $a^w (x,D) \geqs 0$ on $\cS(\rr d)$ it follows that 
\begin{equation}\label{eq:hermsym}
\overline{( a^w (x,D) g,f) } = ( a^w (x,D) f,g), \quad f,g \in \cS(\rr d). 
\end{equation}

Denote by $k =  (2 \pi)^{-\frac{d}{2}} ( \cF_2^{-1} a ) \circ \kappa^{-1} \in \cS'(\rr {2d})$, with $\kappa \in \rr {2d \times 2d}$ defined by \eqref{eq:kappa}, the Schwartz kernel of the operator $a^w(x,D)$, cf. \eqref{eq:KernelWeylsymbol}. 
The property \eqref{eq:hermsym} entails the Hermitian symmetry
\begin{equation}\label{eq:hermitiankernel}
\begin{aligned}
(\re \, k, \overline f \otimes g) & = (\re \, k, g \otimes \overline f), \\
(\im \, k, \overline f \otimes g) & = - (\im \, k, g \otimes \overline f), 
\end{aligned}
\end{equation}
and hence for any real-valued Schwartz functions $f,g \in \cS(\rr d,\ro)$ we have
\begin{equation}\label{eq:nonnegkernel}
\begin{aligned}
0 & \leqs \frac12 \left( k, ( f + i g) \otimes \overline{( f + i g)}\right) \\
& = \frac12 \left( \re \, k, f \otimes f + g \otimes g \right)
+ \frac12 \left( \im \, k, g \otimes f - f \otimes g \right).
\end{aligned}
\end{equation}

Let $n \in \no \setminus 0$, let $\{ \fy_j \}_{j=1}^n \subseteq \cS(\rr d, \ro)$ and define $\fy_x = \sum_{j=1}^n x_j \fy_j \in \cS(\rr d)$ for $x  = (x_1, \dots, x_n) \in \rr n$. 
If $x,y \in \rr n$ then 
\begin{equation}\label{eq:matrices}
\begin{aligned}
\left( \re \, k, \fy_x \otimes \fy_x \right) & = \left\langle \left( \re \, k, \fy_j \otimes \fy_k \right)_{jk} x, x \right\rangle = \la A_1 x, x \ra, \\
\left( \im \, k, \fy_x \otimes \fy_y \right) & = \left\langle \left( \im \, k, \fy_j \otimes \fy_k \right)_{jk} y, x \right\rangle = \la A_2 y, x \ra, 
\end{aligned}
\end{equation}
where  
$A_1 = \left( \re \, k, \fy_j \otimes \fy_k \right)_{jk} =  \left( \re \, k, \fy_j \otimes \fy_k \right)_{j,k=1}^n  \in \rr {n \times n}$
is a symmetric real matrix, and 
$A_2 = \left( \im \, k, \fy_j \otimes \fy_k \right)_{jk} \in \rr {n \times n}$ 
is an antisymmetric matrix. 
In fact these properties are consequences of \eqref{eq:hermitiankernel}. 

We obtain from \eqref{eq:nonnegkernel} with $f = \fy_x$ and $g = \fy_y$
and \eqref{eq:matrices}
\begin{equation}\label{eq:nonnegdef}
\begin{aligned}
0 \leqs \frac12 \Big( \left\langle A_1 x, x \right\rangle + \left\langle A_1 y, y \right\rangle 
+ \left\langle A_2 x, y \right\rangle - \left\langle A_2 y, x \right\rangle
\Big)
= \left\langle A (x,y), (x,y) \right\rangle
\end{aligned}
\end{equation}
where 
\begin{equation*}
A = \frac12
\left( 
\begin{array}{cc}
A_1 & - A_2 \\
A_2& A_1
\end{array}
\right) \in \rr {2n \times 2n}
\end{equation*}
is a symmetric matrix.
Since $x,y \in \rr n$ are arbitrary, it follows from \eqref{eq:nonnegdef} that 
$A$ is non-negative definite. 
Thus it a covariance matrix. 

From \cite[Chapter~3 \ \S 2.3]{Gelfand4} it now follows that there exists two real-valued Gaussian zero mean tempered $\gsp$s
$v_1, v_2 \in \cL ( \cS(\rr d,\ro) , L_0^2(\Omega) )$ such that the $2 n$-vector 
$(v_1, \fy_1), \cdots, (v_1, \fy_n), (v_2, \fy_1), \cdots, (v_2, \fy_n)$ in $L_0^2(\Omega)$ has covariance matrix equal to $A$. 
We infer for $\fy, \psi \in \cS(\rr d,\ro)$
\begin{align*}
& \bE \left( (v_j, \fy) (v_j, \psi) \right) = \frac12 (\re \, k, \fy \otimes \psi), \quad j = 1,2, \\
& \bE \left( (v_1, \fy) (v_2, \psi) \right) = - \frac12 (\im \, k, \fy \otimes \psi), 
\end{align*}
and these identities extend to complex-valued functions $\fy, \psi \in \cS(\rr d)$ as 
\begin{align*}
& \bE \left( (v_j, \fy) \overline{(v_j, \psi)} \right) = \frac12 (\re \, k, \fy \otimes \overline \psi), \quad j = 1,2, \\
& \bE \left( (v_1, \fy) \overline{(v_2, \psi)} \right) = - \frac12 (\im \, k, \fy \otimes \overline \psi).
\end{align*}

If we finally set $u = v_1 + i v_2$ then we obtain for $\fy, \psi \in \cS(\rr d)$
\begin{align*}
\bE \left( (u, \fy) \overline{(u, \psi)} \right) & = (k, \fy \otimes \overline \psi ), \\
\bE \left( (u, \fy) (u, \psi) \right) & \equiv 0. 
\end{align*}
This means that $u$ is Gaussian symmetric and $k = k_u$. 
From Theorem \ref{thm:symbol} and \eqref{eq:WeylWignerCoord} we deduce that 
$b^w(x,D)$ is the covariance operator of the zero mean Wigner distribution $W_0(u)$ of $u$. 
Hence $b^w(x,D) \geqs 0$ on $\cS(\rr {2d} )$.
\end{proof}

\section{Remarks on non-negativity of the Wigner spectrum}\label{sec:NonnegWignerSpectrum}

In the applied literature the Weyl symbol of the covariance operator of a $\gsp$ $u \in \cL ( \cS(\rr d ) , L_0^2(\Omega) )$
is called the Wigner spectrum \cite{Boashash1,Flandrin2,Flandrin3}.
Recall the formula \eqref{eq:EW} which says that the expected value of the Wigner distribution of $u$ equals
a positive constant times the Weyl symbol of the covariance operator:  
\begin{equation}\label{eq:EW2}
\bE W(u) = (2 \pi)^{-\frac{d}{2}} \sigma_u. 
\end{equation}
Modulo a positive multiplicative constant, the Wigner spectrum thus equals $\bE W(u)$.

It is interesting to compare $\bE W(u)$ with $W(f)$ defined by \eqref{eq:WignerSchwartz} with $g = f$ 
for a \emph{deterministic} function $f \in L^2(\rr d)$. 
In fact Wigner introduced $W(f)$ in 1932 as a candidate for a quantum mechanical probability density in phase space 
for a particle. Hudson's theorem \cite{Grochenig1,Hudson1,Janssen2} however shows that $W(f)$ cannot have probability density interpretation for all $f \in L^2(\rr d)$, since $W(f) \geqs 0$ everywhere in $T^* \rr d$ holds if and only if 
$f$ is a Gaussian of the form
\begin{equation*}
f(x) = e^{ - \la x, A x \ra  + \la b, x \ra + c}
\end{equation*}
where $A \in \GL(d,\co)$ has positive definite real part, $b \in \cc d$ and $c \in \co$. 

Discoveries by Flandrin \cite{Flandrin1,Flandrin2} and Janssen \cite{Janssen4} show that 
the corresponding problem for $\gsp$s admits a much larger class of solutions. 
This problem of finding $\gsp$s $u$ such that $\bE W(u) \geqs 0$ everywhere on $T^* \rr d$ is by 
\eqref{eq:EW2} equivalent to the problem of identifying non-negative 
linear operators $\cS(\rr d) \to \cS'(\rr d)$ whose Weyl symbols are nonnegative.

The next example taken from \cite[Chapter~3.3.3]{Flandrin2}
gives one class of stochastic processes on $\rr d$ for which the Wigner spectrum 
satisfies $\bE W(u) \geqs 0$ on $T^* \rr d$. 

\begin{example}\label{ex:nonnegweylsymbol}
Let $f \in L^2(\rr d) \setminus \{ 0 \}$ and define $u(x; y, \eta ) = M_\eta T_y  f(x)$
where $(y,\eta) \in \rr {2d}$ is a Gaussian random variable with probability density
\begin{equation*}
p(y,\eta) = (2 \pi)^{-d } ( a b)^{- \frac{d}{2}}  \exp\left( - \frac1{2a} |y|^2 - \frac1{2b} |\eta|^2 \right)
\end{equation*}
for $a, b > 0$. 
Writing $p(y,\eta) = p_1(y) p_2(\eta)$ 
with 
$p_1(y) = (2 \pi)^{- \frac{d}{2}} a^{- \frac{d}{2}}  e^{ - \frac1{2a} |y|^2 }$ and
$p_2(\eta) = (2 \pi)^{- \frac{d}{2}} b^{- \frac{d}{2}}  e^{ - \frac1{2b} |\eta|^2 }$
we have 
\begin{align*}
\bE u(x; \cdot) 
& = \iint_{\rr {2d}} p_1(y) p_2(\eta) e^{i \la \eta, x \ra} f(x-y) \dd y \dd \eta
= (p_1 * f)(x) (2 \pi)^{\frac{d}{2}} \wh{p}_2 (x) \\
& = (p_1 * f)(x) e^{- \frac{b}{2} |x|^2}
\end{align*}
and
\begin{equation*}
\bE | u(x; \cdot)|^2
= \iint_{\rr {2d}} p_1(y) p_2(\eta) |f(x-y)|^2 \dd y \dd \eta
= (p_1 * |f|^2)(x).
\end{equation*}
Thus $\rr d \ni x \mapsto u(x; \cdot)$ is a second order stochastic process
that belongs to the space $L^2( \rr d, L^2(\Omega) )$.

For $(y,\eta) \in \rr {2d}$ fixed we have by \cite[Proposition~4.3.2]{Grochenig1} 
\begin{equation*}
W( u(\cdot; y, \eta ) )(x,\xi) = W(f) (x-y, \xi-\eta)
\end{equation*}
which gives
\begin{equation*}
\bE W(u)(x,\xi) = \iint_{\rr {2d}} p(y,\eta) W(f) (x-y, \xi-\eta) \dd y \dd \eta
= p * W(f) (x,\xi). 
\end{equation*}
Now it follows from \cite[Theorem~4.4.4]{Grochenig1}
that $\bE W(u)(x,\xi) \geqs 0$ for all $(x,\xi) \in T^* \rr d$ provided $a b \geqs \frac14$. 
This gives a large class of stochastic processes such that $\bE W(u) \geqs 0$
on $T^* \rr d$. 

\end{example}

Under the restriction $\sigma_u \in L^2(\rr {2d})$ Janssen \cite{Janssen3} works out many
necessary conditions for $\sigma_u$ to be the Weyl symbol of a nonnegative operator. 
The assumption $\sigma_u \in L^2(\rr {2d})$ is equivalent to the assumption Hilbert--Schmidt for the covariance operator. 
In particular there are lower bounds on measures of spread of $\sigma_u$ that are close in spirit to uncertainty
principles.

\section*{Acknowledgement}

The author is grateful to Hans Feichtinger for helpful comments and for providing references.
The author is a member of Gruppo Nazionale per l’Analisi Matematica, la Probabilit\`a e le loro Applicazioni (GNAMPA) -- Istituto Nazionale di Alta Matematica
(INdAM).


\end{document}